\newtheorem{satz}{Theorem}
\newtheorem{theorem}[satz]{Theorem}
\newtheorem{lemma}[satz]{Lemma}
\newtheorem{corollary}[satz]{Corollary}
\newtheorem{remark}[satz]{Remark}
\def\Z{\mathbb {Z}}
\def\F{\mathbb {F}}
\def\E{\mathsf{E}}
\def\a{\alpha}
\def\d{\delta}
\def\({\big (}
\def\){\big )}
\def\le{\leqslant}
\def\ge{\geqslant}
\def\_phi{\varphi}
\def\eps{\varepsilon}
\def\Gr{{\mathbf G}}
\def\FF{\widehat}
\def\ov{\overline}
\def\la{\lambda}
\def\D{\Delta}
\newcommand{\bp}{\bigskip}
\author{I.D. Shkredov}
\title{
On the 
distribution of quadratic residues 
}
\date{}
\begin{document}
	\maketitle


\begin{center}
	Annotation.
\end{center}

{\it \small
    In our paper, we apply additive--combinatorial methods to study the distribution of the set of squares $\mathcal{R}$ in the prime field. We obtain the best   
    upper bound on the number of gaps in  $\mathcal{R}$  at the moment and generalize this result for sets with small doubling. 
}
\\

\section{Introduction}

Let $p$ be a prime number. 
Consider the simplest non--trivial multiplicative subgroup in $\F_p^* = \F_p \setminus \{0\}$, namely, the set of quadratic residues 
$$
    \mathcal{R} = \{ x^2  ~:~ x\in \F^*_p \} \,.  
$$
The set $\mathcal{R}$ (and its complement $\mathcal{N} = \F_p^* \setminus \mathcal{R}$, i.e. the set of quadratic non--residues) is a good constructive model for a randomly chosen  subset of $\F_p^*$ (each element of the set is taken with probability $1/2$).  
On the other hand, 
the set of quadratic residues is 
strongly 
related to 
bounds on $L$--functions, see 
\cite{Burgess2}, \cite{Burgess3}, \cite{Burgess1} 
and \cite{IK_book}. 
The question 
regarding 
the distribution of the elements of $\mathcal{R}$ attracts   the attention of many researchers see, e.g., \cite{alsetri2022hilbert}, 
\cite{Burgess2}---\cite{chang2008question}, 
\cite{FI_char_sums}, \cite{H-B_Burgess_moments}, \cite{Karatsuba_survey}, \cite{Polya}, 
\cite{shao2015character}---\cite{volostnov2018double} 
and so on. 
For example, Vinogradov \cite{Vinogradov_selected} considered the maximal distance $d(p)$ between quadratic residues and conjectured that  $d(p) \ll_\eps p^{\eps}$, where $\eps>0$ is any number.
The first non--trivial results in this direction were  obtained in \cite{Polya} and  \cite{Vinogradov_1918}. 
The best result at the moment is due to Burgess (see \cite{Burgess2})
\begin{equation}\label{f:Burgess}
    d(p) \ll p^{1/4} \log p \,.
\end{equation}
Another question concerning the distribution of the set $\mathcal{R}$ is the following.
Take any  integer $1\le h\le d(p)$ and define 
\begin{equation}\label{def:S(h)}
    S(h) = S^{\mathcal{R}} (h)= \max \{ |S| ~:~ S\subset \F_p,\, S\dotplus \{ 0,1,\dots,h-1\} \subseteq \mathcal{R} \} \,.
\end{equation}
What can be said about the quantity $S(h)$? In other words,  how many segments of length $h$ are contained in $\mathcal{R}$ (equivalently, how many gaps of $h$ consecutive elements exist in $\mathcal{N}$)? 
The mentioned 
problem was considered in \cite[Theorem 2']{FI_char_sums}, \cite[Theorem 1]{H-B_Burgess_moments}  and the best result in this direction can be found in paper \cite{shao2015character}, where the author proves, in particular, that 
\begin{equation}\label{f:Shao}
    S(h) \ll_{\eps,r} \frac{p^{1/2 + 1/2r +\eps}}{h^2} \,,
\end{equation}
where $\eps>0$ is an arbitrary number, $r$ is a positive integer and $h\ge p^{1/2r}$. 
As for applications of the upper bounds for $S(h)$, 
refer to, say, 
\cite{alsetri2022hilbert}, \cite{DES_digits} and \cite{DES_Hamming}.

The aim of this work is to show that using combinatorial methods one can improve bound \eqref{f:Shao}. 
Also, we interpret the appearance of the multiplicative energy in Burgess's proof 
quite transparently. 
Our method is more flexible and slightly different from the classical approach and its variations, see \cite{Burgess1} and 
\cite{IK_book}. 
In 
the 
proof, we need the only consequence of the Weil bound on the multiplicative character, namely, that for any different non--zero  shifts $s_1,\dots,s_k \in \F_p$ one has 
\begin{equation}\label{f:R_s}
    |\mathcal{R}^{+}_{s_1,\dots,s_k}| := |\mathcal{R} \cap (\mathcal{R}-s_1) \cap \dots \cap (\mathcal{R}-s_k)| \le \frac{p}{2^{k+1}} + k\sqrt{p} 
\end{equation}
and similar for $\mathcal{N}$. 
Let us formulate our main result. 
For $a\neq b$ denote by $\mathcal{Q}_{a,b}$ the set 
\[
    \mathcal{Q}_{a,b} = \left\{ x \in \F_p ~:~ \frac{x+b}{x+a} \in \mathcal{R} \right\} = \frac{a-b}{\mathcal{R}-1}-b \,,
\]
and 
\[
    \mathcal{Q}'_{a,b} = \left\{ x \in \F_p ~:~ \frac{x+b}{x+a} \in \mathcal{N} \right\} = \frac{a-b}{\mathcal{N}-1}-b \,.
\]

\begin{theorem}
    Let $p$ be a prime number, $C>0$ be a real number, and $h$ be any positive integer such that 
\begin{equation}\label{cond:h}
    h\ge 2\log p - 4 \log (C\log p) + 4 \,.
\end{equation}
    Then 
\begin{equation}\label{f:distribution}
    S^{\mathcal{R}} (h),\, S^{\mathcal{N}} (h),\, S^{\mathcal{Q}_{a,a+1}} (h),\,
    S^{\mathcal{Q}'_{a,a+1}} (2h) \ll \frac{C_* \sqrt{p} \cdot \log^2 p \cdot \log^2  \left(\frac{4h}{\min^{} \{h, 2\log p\}} \right) \cdot \d_p (h)}{h^2} \,,
\end{equation}
    where $C_* = C$ if $h\le 2\log p$ and $C_* =1$ otherwise, and 
\begin{equation*}
    \d_p (h) = 
    \left\{
    \begin{array}{lr}
        1 \,, \quad  \quad \quad \,\, h\le 2 \log p \,, \\
        \frac{h}{\log p}\,, \quad \quad \, 2\log p < h \le \log^2 p \,,\\
        \log p \,, \quad \quad  h> \log^2 p \,.
    \end{array}\right.
\end{equation*}
\label{t:distribution}
\end{theorem}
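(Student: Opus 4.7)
The plan is first to reduce the $\mathcal{Q}$-statements to statements about $\mathcal{R}$ and $\mathcal{N}$, and then to prove the bound on $S^{\mathcal{R}}(h)$ by combining (\ref{f:R_s}) with an additive--combinatorial Cauchy--Schwarz argument. The reduction is direct: $x\in\mathcal{Q}_{a,a+1}$ is equivalent to $x+a$ and $x+a+1$ lying in the same class, so a segment of length $h$ in $\mathcal{Q}_{a,a+1}$ forces a window of length $h+1$ to lie entirely in $\mathcal{R}$ or in $\mathcal{N}$; similarly, $x\in\mathcal{Q}'_{a,a+1}$ forces $x+a$ and $x+a+1$ to lie in different classes, so a segment of length $2h$ in $\mathcal{Q}'_{a,a+1}$ produces an arithmetic progression of length $h+1$ and common difference $2$ in $\mathcal{R}$ (and one in $\mathcal{N}$). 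Since (\ref{f:R_s}) applies to any set of distinct shifts, the moment arguments below treat such APs on equal footing with intervals, so it suffices to bound $S^{\mathcal{R}}(h)$, the estimate for $S^{\mathcal{N}}(h)$ following by symmetry.

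Fix $S\subseteq\F_p$ with $S+I\subseteq\mathcal{R}$, $I=\{0,\dots,h-1\}$, and set $N=|S|$. The baseline moment bound follows from $S\subseteq\bigcap_{j\in J}(\mathcal{R}-j)$ for every $J\subseteq I$: by (\ref{f:R_s}),
\[
    N \le p/2^{|J|}+(|J|-1)\sqrt{p}\,.
\]
Taking $|J|\sim\log_2 p$ gives $N\ll\sqrt{p}\log p$, which already handles the regime $h\le 2\log p$ up to the stated logarithmic refinements. The second ingredient is an additive inequality: if $s,\,s+d\in S$ with $0<|d|<h$, then $(s+I)\cup(s+d+I)$ is an interval of length $h+|d|$ contained in $\mathcal{R}$, giving
\[
    r_{S-S}(d)\;\le\;S^{\mathcal{R}}(h+|d|)\,,\qquad 0<|d|<h\,.
\]

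To close the induction I apply Cauchy--Schwarz in the form $(Nh)^2\le|S+I|\cdot E_+(S,I)$, together with the identity $E_+(S,I)=Nh+2\sum_{d=1}^{h-1}(h-d)\,r_{S-S}(d)$. Feeding in the additive inequality and the inductive hypothesis $S^{\mathcal{R}}(h')\ll\sqrt{p}\log^{O(1)}p/h'^2$ for $h'>h$, the convergent series $\sum_{d=1}^{h-1}(h-d)/(h+d)^2=O(1)$ controls $E_+(S,I)$ up to polylogarithms. Combining with a sharper bound on $|S+I|$, obtained by applying the moment method directly to the union of intervals $S+I\subseteq\mathcal{R}$, closes the induction. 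The three regimes of $\delta_p(h)$ emerge by tracking the logarithmic factors through this closure: for $h\le 2\log p$ the direct Weil contribution dominates (producing the factor $C_*=C$); for $2\log p<h\le\log^2 p$ a linear factor $h/\log p$ is absorbed into $|S+I|$; for $h>\log^2 p$ a further $\log p$ arises from dyadically summing over $d$ in the energy.

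The crucial step, and the main obstacle, is obtaining the sharper bound on $|S+I|$ required by the closure: the trivial $|S+I|\le(p-1)/2$ alone yields only $N\ll p^{3/4}/h$, which is far too weak. This is where the ``multiplicative energy'' interpretation advertised in the introduction must enter transparently: one exploits the interaction between the additive segment structure of $S+I$ and the multiplicative closure of $\mathcal{R}$, applying (\ref{f:R_s}) for a carefully chosen family of shifts to extract the missing $h^{-1}$ saving (up to polylogarithms). Once this sharper input on $|S+I|$ is secured, the remaining parameter optimization reproduces the exact expression in (\ref{f:distribution}), including the regime-dependent factor $\delta_p(h)$ and the inner logarithm $\log^2(4h/\min(h,2\log p))$.
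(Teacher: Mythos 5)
Your reduction of the $\mathcal{Q}_{a,a+1}$ and $\mathcal{Q}'_{a,a+1}$ cases (telescoping the consecutive ratios $\frac{x+a+1}{x+a}$ to force a long run, respectively a long step-two progression, inside $\mathcal{R}$ or $\mathcal{N}$) is exactly what the paper does, and your baseline bound $N\ll\sqrt p\log p$ from \eqref{f:R_s} is also the paper's starting point. But the core of your argument for $S^{\mathcal{R}}(h)$ does not work. By definition \eqref{def:S(h)} the sum $S\dotplus I$ is \emph{direct}, so $|S+I|=|S|h$ and every representation count $r_{S+I}(x)$ is $0$ or $1$; hence $\E^{+}(S,I)=|S|h$ exactly, and your Cauchy--Schwarz step $(|S|h)^2\le |S+I|\cdot\E^{+}(S,I)$ is an identity carrying no information. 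The auxiliary inequality $r_{S-S}(d)\le S^{\mathcal{R}}(h+|d|)$ and the induction on $h$ therefore never produce a bound. You then correctly identify that everything hinges on a nontrivial upper bound for $|S+I|=|S|h$ --- but that \emph{is} the statement to be proved (up to one factor of $h$), and you defer it to an unspecified application of \eqref{f:R_s} ``for a carefully chosen family of shifts.'' That deferred step is the entire content of the proof, and it is missing.

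What the paper actually does is multiplicative, not additive. Writing $P^{1/m}_0=\{0,1,\dots,[(h-1)/m]\}$ and using $A=S\dotplus[h]\subseteq\mathcal{R}$, one gets the inclusion $\frac{P^{1/2k}_0}{P^{1/2}_0\dotplus S}\subseteq\bigcap_{j=1}^{k}\left(j^{-1}\cdot\mathcal{R}-j^{-1}\right)$ (formula \eqref{f:main_inclusion}), whose right-hand side has size $\ll k\sqrt p$ by the Weil bound \eqref{f:R_s} once $k\gtrsim\log p$; this is where condition \eqref{cond:h} comes from. The ratio set on the left is then bounded \emph{from below} by $|Q|^2|A|^2/\E^{\times}(Q,A)$, where $Q$ is the set of primes in a dyadic piece of $P^{1/2k}_0$ and the multiplicative energy $\E^{\times}(Q,[h]\dotplus S)$ is controlled by Lemma \ref{l:E(Q,A)} via unique factorization and the Erd\H{o}s--Tur\'an inequality. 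This is the ``transparent appearance of the multiplicative energy'' advertised in the introduction: it is the multiplicative energy of a set of primes against the structured set $[h]\dotplus S$, not an additive energy of $S$ with the interval. Comparing the upper and lower bounds for the ratio set yields \eqref{f:distribution}, with the three regimes of $\d_p(h)$ coming from the $\min$ in \eqref{f:E(Q,A)} and from the choice $k=2^{-1}\min\{h/2,\log p\}-1$. None of this mechanism is present in your proposal, so the proof has a genuine gap at its central step.
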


It is widely believed that the Burgess bound \eqref{f:Burgess} can be improved.
On the contrary, 
it can be shown 
that condition \eqref{cond:h}  and estimate \eqref{f:distribution} are close to optimal, at least for small $h$, see Remark \ref{r:optimality} below. 
Also, it is easy to see that bound \eqref{f:distribution}, in a sense, allows us to break the square--root barrier  for $h\ge \log^{3+\eps} p$, $\eps>0$, see Corollary \ref{c:R_h}. 
We combine the obtained results concerning the square--root barrier in the next theorem.

\begin{theorem}
    Let $p$ be a prime number and $h\ge 2\log p$ be an integer. 
    Then in the notation of Theorem \ref{t:distribution} one has 
\begin{equation}\label{f:R_h_intr}
    |\mathcal{R}^{+}_{1,\dots,h}| \ll \frac{\sqrt{p}\cdot \log^2 p  \cdot \log^2 \left(\frac{4h}{\log p}\right) \cdot \d_p (h)}{h} \,.
\end{equation}
    Now let $X\subseteq \F_p$ be a set and $H \subseteq \F_p$ be an arithmetic progression. 
    Suppose that $|X| \ge \log p$ and $H+X \subseteq \mathcal{R}$. 
    Then 
\begin{equation}\label{f:R_R*_intr}
    |\mathcal{R}^{+}_{(H+X) \cup (H+X)^{-1}}| \ll \frac{\sqrt{p} \log p }{|H|^c} \,,
\end{equation}
    where $c>0$ is an absolute constant. 
\label{t:A_s_intr}
\end{theorem}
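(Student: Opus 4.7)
For the first estimate \eqref{f:R_h_intr}, the plan is a direct appeal to Theorem~\ref{t:distribution}. Observing that $|\mathcal{R}^{+}_{1,\dots,h}| = S^{\mathcal{R}}(h+1)$ (the maximum $|S|$ with $S + \{0,1,\dots,h\} \subseteq \mathcal{R}$ is realised by $S = \mathcal{R}^{+}_{1,\dots,h}$), and noting that $h \ge 2\log p$ places $h+1$ inside the admissibility range \eqref{cond:h} for $p$ sufficiently large, \eqref{f:R_h_intr} follows by applying Theorem~\ref{t:distribution} to $h+1$.

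The main content is \eqref{f:R_R*_intr}. Set $L := |H|$, $A := \mathcal{R}^{+}_{(H+X) \cup (H+X)^{-1}}$, and $B := \mathcal{R}^{+}_{1,\dots,L-1}$, so that $|B| = S^{\mathcal{R}}(L)$. After rescaling by the inverse of the common difference of $H$ (the non-residue case swaps $\mathcal{R}$ and $\mathcal{N}$ and is handled symmetrically) I may assume $H = \{0, 1, \dots, L-1\}$. The plan is to show that each $y \in A$ and each $x \in X$ produces two elements of $B$ whose sum depends only on $y$: \emph{(a)} $y - (h+x) \in \mathcal{R}$ for all $h \in H$ says the consecutive block $y-x, y-x-1, \dots, y-x-(L-1)$ lies in $\mathcal{R}$, giving $y - x - (L-1) \in B$; \emph{(b)} $y - (h+x)^{-1} \in \mathcal{R}$ combined with $\chi(h+x) = 1$ (from $h+x \in H+X \subseteq \mathcal{R}$) rewrites as $y(h+x)-1 \in \mathcal{R}$ for $h \in H$, and dividing the resulting arithmetic progression by $y \in \mathcal{R}$ (which preserves $\mathcal{R}$) yields $x - y^{-1} \in B$. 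The key identity
\[
    (y - x - (L-1)) + (x - y^{-1}) \;=\; y - y^{-1} - (L-1)
\]
is independent of $x$, so each $y \in A$ produces at least $|X|$ representations of the fixed value $c_y := y - y^{-1} - (L-1)$ in $B+B$. Markov's inequality together with the quadratic $y^2 - (c_y + L - 1)y - 1 = 0$ (at most two roots per $c_y$) then yields
\[
    |A| \;\le\; \frac{2|B|^2}{|X|} \;\le\; \frac{2\,S^{\mathcal{R}}(L)^2}{\log p}.
\]

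To finish, I plug in Theorem~\ref{t:distribution}'s bound $S^{\mathcal{R}}(L) \ll \sqrt{p}\,\log^{O(1)} p / L^2$ to get $|A| \ll p\,\log^{O(1)} p / L^4$, which rearranges into the claimed $\sqrt{p}\log p/|H|^c$ (for a suitable absolute $c > 0$) provided $L$ is large enough, roughly $L \gg p^{1/(2(4-c))}$. The hard part will be the complementary small-$L$ regime, especially $L$ near $\log p$, where the reduction is weaker than the Weil bound \eqref{f:R_s}: there \eqref{f:R_s} applied to any $\sim \log p$ shifts inside $(H+X) \cup (H+X)^{-1}$ yields only $|A| \ll \sqrt{p}\log p$, which misses the target by a factor of $|H|^c$. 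Closing this last gap is the principal technical obstacle; I would attempt it by sharpening step (b) above to produce more than $|X|$ pairs per $y$ (exploiting further multiplicative redundancy of $(H+X)^{-1}$) or by an interpolation between the reduction and Weil inputs across the transition range for $L$.
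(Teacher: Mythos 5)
There are two genuine gaps, one in each half.

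For \eqref{f:R_h_intr}, your opening identity $|\mathcal{R}^{+}_{1,\dots,h}| = S^{\mathcal{R}}(h+1)$ is false: definition \eqref{def:S(h)} requires the sum $S\dotplus\{0,1,\dots,h\}$ to be \emph{direct}, and $\mathcal{R}^{+}_{1,\dots,h}+\{0,1,\dots,h\}$ is essentially never direct (any two elements of $\mathcal{R}^{+}_{[h]}$ at distance at most $h$ already give a collision). So $\mathcal{R}^{+}_{[h]}$ is not an admissible competitor for $S^{\mathcal{R}}(h+1)$, only a set coverable by such competitors. The paper's proof (Corollary \ref{c:R_h}) inserts exactly the missing step: take a maximal $X\subseteq\mathcal{R}^{+}_{[h]}$ with $X+[h]$ direct, deduce $\mathcal{R}^{+}_{[h]}\subseteq X+[h]-[h]$, hence $|X|\ge|\mathcal{R}^{+}_{[h]}|/(2h)$, and apply Theorem \ref{t:distribution} to $X$. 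This Ruzsa covering costs a factor of $h$, which is precisely why the theorem's bound has $h$ rather than $h^2$ in the denominator --- the mismatch between your claimed reduction (which would give $h^2$) and the stated result should have been a warning sign.

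For \eqref{f:R_R*_intr}, your double count is correct as far as it goes: with $y\in A$ and $x\in X$ one indeed gets $y+x\in B$ and $x+y^{-1}\in B$ with $B=\mathcal{R}^{+}_{1,\dots,L-1}$, the difference $y-y^{-1}$ is independent of $x$, the map $y\mapsto y-y^{-1}$ is at most $2$-to-$1$, and so $|A|\ll|B|^{2}/|X|$. But this does not prove the theorem in any nonempty range: since $H+x\subseteq\mathcal{R}$, Burgess forces $L=|H|\ll p^{1/4}\log p$, whereas (using the corrected $|B|\ll\sqrt{p}\log^{O(1)}p/L$ from the first part, not $S^{\mathcal{R}}(L)$) your bound $|A|\ll p\log^{O(1)}p/(L^{2}\log p)$ beats $\sqrt{p}\log p/L^{c}$ only when $L^{2-c}\gg\sqrt{p}\log^{O(1)}p$, i.e. $L$ strictly above $p^{1/4}$. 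The ``complementary small-$L$ regime'' you defer is therefore the entire statement. The paper's proof (Corollary \ref{c:R_R*}) goes a different way: it uses $\mathcal{R}^{+}_{H+X}=(\mathcal{R}^{+}_{X})^{+}_{H}$ and the identity $(\mathcal{R}^{+}_{H+X})^{-1}=\mathcal{R}^{+}_{(H+X)^{-1}}$ (valid because $H+X\subseteq\mathcal{R}$), bounds $|\mathcal{R}^{+}_{X}|\ll\sqrt{p}\log p$ via the Weil estimate \eqref{f:R_s} since $|X|\ge\log p$, and then obtains the decisive saving $|H|^{-c}$ from estimate \eqref{f:Ah_cap_Ah*} of Theorem \ref{t:As_cap_As*}, i.e. from the Bourgain--Gamburd-type growth machinery of \cite{sh_BG} applied to the intersection of a set with a dilate of the inverse of another. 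That sum--product input is the idea your argument is missing, and no elementary count of the kind you propose is known to replace it.
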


In our previous paper \cite[Section 5]{sh_BG} we obtained similar bounds to \eqref{f:R_h_intr}, \eqref{f:R_R*_intr} starting from an arbitrary family of shifts $\{s_1,\dots,s_k\}$ in the set $\mathcal{R}^{+}_{s_1,\dots,s_k}$. 
Here our 
shift sets 
are more 
specific 
(which is both an advantage and a disadvantage).

The paper is organized as follows. 
We obtain Theorem \ref{t:distribution} and the first part of Theorem \ref{t:A_s_intr} in Section \ref{sec:proof}. Also, we consider the question about maximal possible shifts of so--called generalized arithmetic progressions, see the required definitions in Section \ref{sec:def}. 
In Section \ref{sec:AC} we study additive--combinatorial generalizations 
of 
$S(h)$, which are interesting in its own right and obtain the second part of Theorem \ref{t:A_s_intr}.

\bp 

We thank Jozsef Solymosi 
for useful 
discussions. 

\section{Definitions}
\label{sec:def}

Given two sets $A,B\subseteq \F_p$, define  
the {\it sumset} 
of $A$ and $B$ as 
$$A+B:=\{a+b ~:~ a\in{A},\,b\in{B}\}\,.$$
In a similar way we define the {\it difference sets}, the {\it product set} and the {\it higher sumsets}, e.g., $2A-A$ is $A+A-A$.
If the sum of $A$ and $B$ is direct, that is $|A+B|=|A||B|$, then we write $A\dotplus B$. 
For an abelian group $\Gr$
the Pl\"unnecke--Ruzsa inequality (see, e.g., \cite{TV}) holds stating
\begin{equation}\label{f:Pl-R} 
|nA-mA| \le \left( \frac{|A+A|}{|A|} \right)^{n+m} \cdot |A| \,,
\end{equation} 
where $n,m$ are any positive integers. 
Further if $|A+B|\le K|A|$ for some sets $A,B \subseteq \Gr$, then for any $n$ one has 
\begin{equation}\label{f:Pl-R+} 
    |nB| \le K^n |A| \,.
\end{equation}
If $A\subseteq \F_p$ and $\lambda \in \F_p$, $\la \neq 0$, then we write $\lambda \cdot A := \{\lambda a ~:~ a\in A\}$.
We  use representation function notations like  $r_{A+B} (x)$ or $r_{A-B} (x)$ and so on, which counts the number of ways $x \in \F_p$ can be expressed as a sum $a+b$ or  $a-b$ with $a\in A$, $b\in B$, respectively. 
For example, $|A| = r_{A-A} (0)$.
For any two sets $A,B \subseteq \F_p$ the {\it additive energy} of $A$ and $B$ is defined by
$$
 \E^{+} (A,B) = |\{ (a_1,a_2,b_1,b_2) \in A\times A \times B \times B ~:~ a_1 - b^{}_1 = a_2 - b^{}_2 \}| = \sum_\la r^2_{A-B} (\la) \,.
$$
If $A=B$, then  we simply write $\E^{+} (A)$ for $\E^{+} (A,A)$.
In a similar way, define the {\it  multiplicative energy} $\E^{\times} (A,B)$ of sets $A, B \subseteq \F_p$. 
Clearly, $\E^{+}(A,B) = \E^{+} (B,A)$ and by the Cauchy--Schwarz inequality 
\begin{equation}\label{f:energy_CS}
\E^{+} (A,B) |A \pm B| \ge |A|^2 |B|^2 \,.
\end{equation}
Given a set $A \subseteq \F_p$ and elements $s_1,\dots, s_k\in \F_p$, we write 
$$
    A^{+}_{s_1,\dots,s_k} = A^{+}_{\{s_1,\dots,s_k\}} := A \cap (A-s_1) \cap \dots \cap (A-s_k) \,,
$$
    and similar for $A^{\times}_{s_1,\dots,s_k}$, where $s_1,\dots,s_k \neq 0$, namely,
$$
 A^{\times}_{s_1,\dots,s_k} = A^{\times}_{\{s_1,\dots,s_k\}} := A \cap (As^{-1}_1) \cap \dots \cap (As^{-1}_k) \,.
$$
For any set $A$ the following holds 
\begin{equation}\label{f:inclusion_As}
    A^{+}_{s_1,\dots,s_k} + \{0,s_1,\dots,s_k\} \subseteq A \,,
    \quad \quad
    \mbox{and}
    \quad \quad
    A^{\times}_{s_1,\dots,s_k} \cdot \{1,s_1,\dots,s_k\} \subseteq A \,.
\end{equation}

If $P_1,\dots, P_d \subset \F_p$ are arithmetic progressions, then $Q:=P_1+\dots+P_d$ is a {\it generalized arithmetic progression} (GAP) of rank $\mathrm{rk}(Q) = d$. A generalized arithmetic progression, $Q,$ is called to be {\it proper} if $|Q| = \prod_{j=1}^d |P_j|$. For properties of generalized arithmetic progressions consult, e.g., \cite{TV}. 

For any function $f:\F_p \to \mathbb{C}$ and $\rho \in \F_p$ define 
the Fourier transform of $f$ at $\rho$ by the formula 
\begin{equation}\label{f:Fourier_representations}
\FF{f} (\rho) = \sum_{g\in \F_p} f(g) e(-g\rho) \,,
\end{equation}
where $e(x):=  e^{2\pi i x/p}$.  We use the same capital letter to denote  set $A\subseteq \F_p$ and   its characteristic function $A: \F_p \to \{0,1 \}$.

Let us generalize the notions of $S(h)$ and $d(p)$. In this paper we have deal with the set 
of quadratic residues 
but our arguments equally work for the set $\mathcal{N}$.
More concretely, 
given a set $A \subset \F_p$ and a family of sets $\mathcal{F}$ write
\[
    d_\mathcal{F} (p) = \max \{ |A| ~:~ A \subseteq \mathcal{R},\, A\in \mathcal{F} \} \,,
\]
    and 
\[
    S(A) = \max \{ |S| ~:~ S\subset \F_p,\, S\dotplus A \subseteq \mathcal{R} \} \,.
\]

The signs $\ll$ and $\gg$ are the usual Vinogradov symbols. 
When the constants in the signs  depend on a parameter $M$, we write $\ll_M$ and $\gg_M$.
Let us denote by $[n]$ the set $\{1,2,\dots, n\}$.
All logarithms are to base $2$.
For a prime number $p$ we write $\F_p = \Z/p\Z$ and $\F^*_p = \F_p \setminus \{0\}$. 

\section{The proof of the main result}
\label{sec:proof}

We need an auxiliary  result in the spirit of paper \cite{shao2015character}. 
Our proof, basically, 
repeats the argument of \cite[Lemma 2]{BKS_alg}.

\begin{lemma}
    Let $h_* \le h$ be positive integers, $8h_* h<p$ and $Q$ be the set of the primes in $[h_*/2,h_*]$. Also, let $A:=[h]\dotplus S$.
    Then 
\begin{equation}\label{f:E(Q,A)}   
    \E^\times (Q,A) \ll \frac{|Q|^2 |A|^2}{p} + |Q| |A| \cdot \min \left\{ \frac{h_*}{\log h_*}, \frac{\log p}{\log h_*} \right\} \,.
\end{equation}
\label{l:E(Q,A)}
\end{lemma}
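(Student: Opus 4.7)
My strategy, following \cite[Lemma~2]{BKS_alg}, is to split $\E^\times(Q,A)$ into diagonal and off-diagonal pieces and lift the off-diagonal congruence to an integer equation, which the hypothesis $8h_*h<p$ makes possible.

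Starting from $\E^\times(Q,A)=\sum_{q_1,q_2\in Q}|q_1A\cap q_2A|$, the diagonal $q_1=q_2$ contributes exactly $|Q||A|$, since the direct sum $A=[h]\dotplus S$ forces unique decomposition, and hence $q(s_1+t_1)=q(s_2+t_2)\Rightarrow(s_1,t_1)=(s_2,t_2)$. For the off-diagonal, write $a_i=s_i+t_i$; the equation $q_1(s_1+t_1)\equiv q_2(s_2+t_2)\pmod p$ rearranges to
\[
q_1t_1-q_2t_2\equiv q_2s_2-q_1s_1\pmod p.
\]
The left-hand side is an integer with $|q_1t_1-q_2t_2|<h_*h<p/8$, so the congruence pins its value down as an integer $N$ in $I:=(-h_*h,h_*h)$. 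Since distinct primes $q_1,q_2\in[h_*/2,h_*]$ are coprime, the number of $(t_1,t_2)\in[h]^2$ solving $q_1t_1-q_2t_2=N$ is at most $2h/h_*+1$: any two solutions differ by $k(q_2,q_1)$ for some integer $k$, and each coordinate must stay in $[h]$.

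Hence the off-diagonal contribution is bounded by $O(h/h_*)\cdot M$ with
$M=\#\{(q_1,q_2,s_1,s_2)\in Q^2\times S^2:q_2s_2-q_1s_1\in I\pmod p\}$. I would expand $M$ by Fourier inversion:
\[
M=\frac{1}{p}\sum_\rho \h{\mathbf{1}_I}(\rho)\,|G(\rho)|^2,\qquad G(\rho)=\sum_{q\in Q,\,s\in S}e(qs\rho/p).
\]
The zero-frequency contribution $|I||Q|^2|S|^2/p$, multiplied by $h/h_*$, gives $O(|Q|^2|A|^2/p)$, matching the first summand of \eqref{f:E(Q,A)}. The first option $h_*/\log h_*\asymp|Q|$ in the minimum captures the trivial estimate $\E^\times(Q,A)\le|Q|^2|A|$ via the Prime Number Theorem. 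The second, non-trivial option $\log p/\log h_*$ should emerge from a divisor-type argument for $\max_x r_{QA}(x)$: if $k$ primes $q_i\in Q$ all satisfy $x/q_i\in A$, writing $x/q_i=s_i+t_i$ and lifting to $\Z$ shows that $q_i\mid(x+\ell_ip)$ for some integer $\ell_i$ of bounded size; after pigeonholing the $\ell_i$, the pairwise coprime primes sharing a common value of $\ell$ jointly divide an integer of size $O(p)$, forcing $(h_*/2)^k\lesssim p$ and hence $k\lesssim\log p/\log h_*$.

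The principal obstacle is controlling the Fourier error $p^{-1}\sum_{\rho\ne0}\h{\mathbf{1}_I}(\rho)|G(\rho)|^2$ at a level compatible with this divisor-type bound; carrying this out requires pigeonholing the frequencies $\rho$ against subsets of $Q$ on which $G(\rho)$ concentrates, and then exploiting coprimality in the narrow window $[h_*/2,h_*]$, which is precisely the content of \cite[Lemma~2]{BKS_alg} that I would transcribe.
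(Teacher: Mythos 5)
Your overall architecture (diagonal/off\-/diagonal split, lifting the congruence to $\Z$ via $8h_*h<p$, the $O(h/h_*)$ count of $(t_1,t_2)$, and a Fourier expansion whose zero frequency gives the term $|Q|^2|A|^2/p$) does track the paper's argument, but two steps are genuinely wrong or missing. First, the divisor argument is applied in the wrong place. The pointwise bound $\max_x r_{QA}(x)\lesssim \log p/\log h_*$ is false for general $S$: since $A=[h]\dotplus S$ with $S$ arbitrary, one can choose $|Q|$ elements of $S$ so that $x/q\in A$ for every $q\in Q$, giving $r_{QA}(x)=|Q|\asymp h_*/\log h_*$, which is much larger than $\log p/\log h_*$. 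Your sketch also does not repair this: the exponents $\ell_i$ in $q_i\mid (x+\ell_i p)$ range over about $h_*$ values, so the pigeonhole costs a factor $h_*$ and yields only $k\lesssim h_*\log p/\log h_*$, worse than the trivial $k\le|Q|$. In the paper the divisor bound is applied on the \emph{frequency} side, to $r_{QT}(\la)$ with $T=[\lceil p/(8hh_*)\rceil]$: there $\max(QT)<p$ forces $qt=q't'$ in $\Z$, so $r_{QT}(\la)$ is bounded by the number of primes in $[h_*/2,h_*]$ dividing the integer $\la$, which \emph{is} $O(\min\{h_*/\log h_*,\log p/\log h_*\})$. No analogue of this holds for $r_{QA}$.

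Second, your decomposition $a=s+t$ strips the interval off $A$ before the Fourier step, and this loses a factor of $h$ in the error term. Carrying out your own chain: the non-zero frequencies of $M$ give, after Cauchy--Schwarz in $q$, the divisor bound on $r_{QT}$ and Parseval for $S$, at most $|I|\cdot|Q||S|\cdot\min\{\cdots\}\asymp h_*h|Q||S|\min\{\cdots\}$; multiplying by the $O(h/h_*)$ solution count yields $h^2|Q||S|\min\{\cdots\}=h\cdot|Q||A|\min\{\cdots\}$, a factor $h$ larger than the claimed second term. The paper avoids this by first thickening ($\sigma\ll h^{-2}\sum_\la r^2_{Q^{-1}(\overline{H}+\overline{A})}(\la)$ with $\overline{H}=[-h,h]$, $\overline{A}=S+\overline{H}$) and keeping the interval \emph{inside} the set throughout the Fourier analysis: the $O(h/h_*)$ count is applied to the $\overline{H}$-components while Parseval is applied to $\overline{A}$, which costs only $p|\overline{A}|\asymp ph|S|$ rather than $h^2\cdot p|S|$. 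This smoothing gain is exactly the missing $1/h$, and it is not recoverable from your formulation of $M$ (which retains no interval structure). Finally, note that deferring the entire error analysis to ``transcribing'' \cite[Lemma 2]{BKS_alg} leaves the substantive part of the proof unexecuted; the mechanism there is Cauchy--Schwarz in $q$ plus the $r_{QT}$ divisor bound, not a pigeonholing of frequencies against subsets of $Q$.
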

\begin{proof}
    Writing $\ov{H} =[-h,h]$ and $\ov{A}=S+\ov{H}$, we see that
\[
    \sigma:= \E^\times (Q,A) \ll h^{-2} \sum_{\la} r^2_{Q^{-1}(\ov{H}+\ov{A})} (\la) 
\]
\[
    = h^{-2}
    \sum_{\la} \sum_{q,q'\in Q} |\{ (x,x') \in \ov{H} \times \ov{H} ~:~ q'x-qx' \equiv \la \}| \cdot |\{ (a,a') \in \ov{A} \times \ov{A} ~:~ q a' - q'a \equiv \la \}| \,. 
\]
    Below we can consider the case $q\neq q'$ only because the contribution of $q=q'$ corresponds to the second term in \eqref{f:E(Q,A)}. 
    Fix $\la$ and suppose that equation $q'x-qx' \equiv \la$ has one more solution $(x_*,x'_*)$. Then $q'(x-x_*) \equiv q(x'-x'_*) \pmod p$ and since $4 h_* h <p$, we see that the later equation is, actually, an equation in $\Z$. Hence $x \equiv x_* \pmod q$ and  $x' \equiv x'_* \pmod {q'}$. 
    Therefore we have at most $O(h/h_*)$ solutions.
    Thus 
\[
     h_* h \sigma \ll \sum_{|\la|\le 2h_* h} r_{Q\ov{A}-Q\ov{A}} (\la) \,.
\]
    Using the Erd\H{o}s--Tur\'an inequality for the sequence of fractional parts $\{ \frac{qa'-q'a}{p} \}$, where $(a,a',q,q')\in \ov{A} \times \ov{A} \times Q \times Q$ (see \cite[Lemma 1]{BKS_alg}) and writing $T=[\lceil p/(8h h_*) \rceil]$, we obtain by the Cauchy--Schwarz inequality 
\[
    \sigma \ll \frac{|Q|^2 |\ov{A}|^2}{p} + \frac{1}{p} \sum_{t\in T} \left| \sum_{q\in Q} \sum_{a\in \ov{A}} e(tqa) \right|^2 
    \ll 
    \frac{|Q|^2 |A|^2}{p} + \frac{|Q|}{p} \sum_{t\in T} \sum_{q\in Q} |\FF{\ov{A}} (tq)|^2
\]
\[
    = 
     \frac{|Q|^2 |A|^2}{p} + \frac{|Q|}{p} \sum_\la |\FF{\ov{A}} (\la)|^2 r_{QT} (\la) \,.
\]
    Let us estimate the function $r_{QT} (\la)$. 
    If $\la \equiv qt \equiv q't' \pmod p$, then $\la = qt = q't'$ 
    thanks to $p/(2h_* h) \cdot h_* <p$. 
    It follows that the quantity $r_{QT} (\la)$ does not exceed 
    the number of possible primes $q\in [h_*/2,h_*]$ which divide $\la$. 
    In other words, 
    $r_{QT} (\la) = O(\min \{ h_*/\log h_*, \log_{h_*} p\})$. 
    Applying the Parseval identity, we derive 
\[
    \sigma \ll \frac{|Q|^2 |A|^2}{p} + |Q| |A| \cdot \min \left\{ \frac{h_*}{\log h_*}, \frac{\log p}{\log h_*} \right\} \,.
\]
This completes the proof.
$\hfill\Box$
\end{proof}

\bp 

Now we are ready to prove Theorem \ref{t:distribution}. 

\bigskip 

\begin{proof} First of all, let us recall how to obtain the bound $d(p) \ll p^{1/4} \log p$.  To do this we repeat the argument of \cite[Appendix]{sh_BG} and our main bound \eqref{f:distribution} will be obtained similarly.

Let $l$ be an integer parameter which we will choose later. 
Write  $P_s = s + \{0,1,\dots,l-1\} = s + P_0$ for an arbitrary arithmetic progression with step one. 
For any positive integer $m\le l-1$ we use the notation $P^{1/m}_0 = \{0,1,\dots,[(l-1)/m]\}$ and, similarly, $P^{1/m}_s = P^{1/m}_0 + s$. 
Also, let $L=\log p$. 
At the beginning choose 
$l=d(p)$ and take $P_s \subseteq \mathcal{N}$. 
We have
\begin{equation}\label{f:inclusion_R1}
    \frac{P^{1/2}_0}{P^{1/2}_s} \subseteq \mathcal{R}-1 
\end{equation}
due to 
\[
    \frac{P^{1/2}_0}{P^{1/2}_s} + 1 \subseteq \frac{P^{1/2}_0+P^{1/2}_s }{P^{1/2}_s} \subseteq \frac{P^{1}_s}{P^{1/2}_s} \subseteq \frac{\mathcal{N}}{\mathcal{N}} =  \mathcal{R} \,.
\]
Let $k$ be an integer parameter which we will choose in a moment. 
Since for any $j\in [k]$ one has  $j \cdot P^{1/2k}_0 \subseteq P^{1/2}_{0}$,  it follows from \eqref{f:inclusion_R1} that 
\begin{equation}\label{f:main_inclusion}
    \frac{P^{1/2k}_0}{P^{1/2}_s} \subseteq (\mathcal{R}-1) \cap (2^{-1}\cdot \mathcal{R}-2^{-1}) \cap \dots \cap (k^{-1} \cdot  \mathcal{R}-k^{-1}) \,.
\end{equation}
    One can show (see \cite[Lemma 26]{sh_BG})
    that $\left|\frac{P^{1/2k}_0}{P^{1/2}_s} \right| \gg |P^{1/2k}_0| |P^{1/2}_s|$
    and using formula \eqref{f:R_s} (for a mixed combination of residues/non--residues), we get
\begin{equation}\label{tmp:10.09_1}
    d^2 (p) k^{-1} \ll |P^{1/2k}_0| |P^{1/2}_a| \le 
    \frac{p}{2^{k}} + k\sqrt{p}  \ll k\sqrt{p} \,,
\end{equation}
    provided $k\gg L$. 
    Choosing $k\sim L$, 
    we 
    obtain  
    $d(p) \ll p^{1/4} \log p$ as required.

    Now we are ready to obtain \eqref{f:distribution} and we consider the quantity $S^{\mathcal{R}} (h)$ only because for $S^{\mathcal{N}} (h)$ the argument is the same. 
    Choose a set $S$ such that $|S|=S(h)$ and $A:= S\dotplus \{ 0,1,\dots,h-1\} \subseteq \mathcal{R}$. 
    Take $k=2^{-1} \min\{h/2, \log p\} - 1$. 
    From \eqref{cond:h} it follows that 
    $\frac{1}{2} \log p - \log (C\log p) \le k \le \frac{1}{2} \log p-1$ and $l:=h \ge 4k+1$.
    As above we see that our main inclusion \eqref{f:main_inclusion} takes place for any $s \in S$ and hence arguing as in \eqref{tmp:10.09_1}, we have 
\begin{equation}\label{tmp:10.09_2}
   \left| \frac{P^{1/2k}_0}{P^{1/2}_0 \dotplus S} \right| \ll C_* L \sqrt{p} \,.
\end{equation}
    Thus it is enough to obtain a good lower bound for the ratio set from \eqref{tmp:10.09_2} and to do this we calculate the multiplicative energy of a subset of this set.
    Take the set of primes $Q$ belonging the second part of $P^{1/2k}_0$ and 
    apply 
    Lemma \ref{l:E(Q,A)} to the sets $Q$ and $P^{1/2}_0 \dotplus S$ with the parameters $h_*= h/2k$ and $h=h/2$ (it is easy to check that the first term in \eqref{f:E(Q,A)}  is negligible and that the condition of the lemma takes place, since $h\le d(p) \ll p^{1/4}L$).
    It remains to combine this result with the Cauchy--Schwarz inequality \eqref{f:energy_CS} and estimate \eqref{tmp:10.09_2}.
    It gives us 
\[
   \frac{|S| h^{2}}{k \log^2 \left(\frac{4h}{\min^{} \{h/2, \log p\}} \right)\cdot \d_p (h)} 
   \ll   \frac{|Q|^2 |S|^2 h^2}{h |S| |Q|} \cdot \left(\min \left\{ \frac{h_*}{\log h_*}, \frac{\log p}{\log h_*} \right\} \right)^{-1}
\]
\begin{equation}\label{f:bound_via_energy}
   \ll
   \frac{|Q|^2 |S|^2 h^2}{\E^\times (Q,A)}
   \ll C_* L\sqrt{p} 
   \,,
\end{equation}   
    where 
$$
    \d_p (h) = \min \left\{ \frac{h}{\min \{ h/2, \log p\} },  \log p \right\} 
$$    
    as required. 

    It remains to obtain the same bound for the quantities $S^{\mathcal{Q}_{a,b}} (h)$ and $S^{\mathcal{Q}'_{a,b}} (h)$.
    Let us start with the first one. 
    Shifting (that is writing $x\to x(b-a)-a = x-a$) we can assume that $a=0$ and $b=1$ and we see that if $x,x+1,\dots,x+h-1\in \mathcal{Q}_{0,1}$, then $\frac{x+1}{x},  \frac{x+2}{x+1}, \dots, \frac{x+h}{x+h-1} \in \mathcal{R}$. 
    Multiplying we obtain that all nonzero numbers among $x,x+1,\dots,x+h$ either belong to $\mathcal{R}$ or $\mathcal{N}$. In other words, we have found a gap in $\mathcal{R}$ or $\mathcal{N}$ and hence we can apply the previous arguments. 
    As for $S^{\mathcal{Q}'_{a,a+1}} (h)$ we see that in this case a half of $x,x+1,\dots,x+h$ belongs to $\mathcal{N}$ or $\mathcal{R}$.  
This completes the proof.
%
%
$\hfill\Box$
\end{proof}

\bp 

Since 
$xy\in \mathcal{R}$ iff $x/y \in \mathcal{R}$ for any nonzero $y$, we obtain 

\begin{corollary}
    For any $a\in \F_p$  one has 
\[
    \max_{h,s} |\{ h ~:~ (x+a)(x+a+1) \in \mathcal{R}\,, x \in [h] +s  \}|
    \ll p^{1/4} \log p \,.
\]
\label{c:quadratic}
\end{corollary}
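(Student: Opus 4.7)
The plan is to recognize that the condition $(x+a)(x+a+1)\in\mathcal{R}$ is, apart from the single exceptional value $x=-a$, equivalent to the condition $x\in \mathcal{Q}_{a,a+1}$, and then invoke the Burgess bound \eqref{f:Burgess} via the same telescoping argument that already appears in the treatment of $S^{\mathcal{Q}_{a,a+1}}(h)$ at the end of the proof of Theorem \ref{t:distribution}.

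First I would write, for any $x$ with $x+a\neq 0$,
\[
    \frac{x+a+1}{x+a} \;=\; \frac{(x+a)(x+a+1)}{(x+a)^2}\,.
\]
Since $(x+a)^2$ is always a nonzero square, the fraction on the left lies in $\mathcal{R}$ iff $(x+a)(x+a+1)$ does. This is precisely the ``$xy\in \mathcal{R}$ iff $x/y\in \mathcal{R}$'' observation quoted before the corollary. Hence any run $[h]+s$ satisfying the hypothesis gives, after possibly deleting $x=-a$, an inclusion $([h]+s)\setminus\{-a\}\subseteq \mathcal{Q}_{a,a+1}$ of length at least $h-1$.

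Next I would run the telescoping step: for $y=s+a+1,\dots,s+a+h$, each ratio $(y+1)/y$ is a quadratic residue, so all partial products $(y+m)/y$ are again quadratic residues. Therefore all nonzero elements of the consecutive integer block $s+a+1,\dots,s+a+h+1$ lie in a single coset of $\mathcal{R}$ in $\F_p^*$; either they are all in $\mathcal{R}$, or they are all in $\mathcal{N}$. This produces a consecutive stretch of length at least $h-O(1)$ contained entirely in $\mathcal{R}$ or entirely in $\mathcal{N}$, equivalently a gap of size $\geq h-O(1)$ in the complementary set.

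The conclusion is then immediate from the Burgess estimate $d(p)\ll p^{1/4}\log p$ of \eqref{f:Burgess}: we obtain $h\ll p^{1/4}\log p$. I do not anticipate any real obstacle, since the corollary is essentially a short dictionary between the ``quadratic progression'' $(x+a)(x+a+1)$ and the set $\mathcal{Q}_{a,a+1}$ combined with a bound already in the paper; the only minor point to watch is the possible removal of the single value $x=-a$, which costs an absolute constant and is absorbed into the implied constant.
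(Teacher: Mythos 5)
Your proposal is correct and follows exactly the route the paper intends: the observation $xy\in\mathcal{R}\iff x/y\in\mathcal{R}$ identifies the condition with $x\in\mathcal{Q}_{a,a+1}$, the telescoping of the ratios $\frac{x+a+m+1}{x+a+m}$ places a whole block of consecutive integers in $\mathcal{R}$ or in $\mathcal{N}$, and the Burgess bound \eqref{f:Burgess} finishes. The only cosmetic remark is that the values $x=-a$ and $x=-a-1$ are automatically excluded by the hypothesis (since $0\notin\mathcal{R}$), so no deletion is even needed.
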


The essence of Corollary \ref{c:quadratic} is uniformity in $a$ and $s$. Without this 
uniformity 
much more general results are known, see \cite{Burgess_eps}.

Now let us formulate a simple consequence of the argument above to sums of multiplicative characters with some quadratic polynomials. 
By $d_*(p)$ denote the maximum of the maximal distance in $\mathcal{R}$ and $\mathcal{N}$.  
Also, let $\chi$ be the Legendre symbol.

\begin{corollary}
    For any $a,s\in \F_p$ and a positive integer $h$, we have 
\begin{equation}\label{f:quadratic_exp}
    \left|\sum_{x=s+1}^{s+h} \chi((x+a)(x+a+1)) \right|
    \le h (1- 2^{-1} d^{-1}_*(p)) + 4 \,.
\end{equation}  
\label{c:quadratic_exp}
\end{corollary}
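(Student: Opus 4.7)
The plan is to rewrite the character sum combinatorially and exploit run-length constraints on the Legendre symbol. Set $u_j = \chi(s+j+a)$ for $j=1,\dots,h+1$, so that the sum becomes $\sum_{j=1}^{h} u_j u_{j+1}$; with $g_j := u_j u_{j+1}$, each factor is $\pm 1$ (or $0$) according to whether $\chi$ is constant, flipped, or vanishing across the pair of consecutive integers $s+j+a$, $s+j+a+1$. The goal is to bound the maximal lengths of runs of $+1$ and of $-1$ in the sequence $g$ and then count.

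The two structural observations are as follows. First, if $\chi$ is constant on $m+1$ consecutive integers, then $m+1 \le d_*(p)-1$ by the very definition of $d_*(p)$, so maximal runs of $+1$ in $g$ have length at most $d_*(p)-2$. Second, if $\chi$ alternates on $\ell+1$ consecutive integers $y, y+1, \dots, y+\ell$, the even-indexed sub-progression $y, y+2, \dots$ has constant character; multiplying this progression by $2^{-1} \pmod p$ and using multiplicativity of $\chi$ yields $\lfloor \ell/2 \rfloor + 1$ consecutive integers (mod $p$) whose characters all coincide up to the global sign $\chi(2)$, so the first observation forces $\ell \le 2 d_*(p) - 3$, and hence maximal runs of $-1$ in $g$ have length at most $2 d_*(p) - 3$.

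Combining these two run-length bounds via an elementary counting argument on the alternating maximal $+1$- and $-1$-runs of $g$ produces at least $h/(d_*(p)-1)-1$ entries equal to $-1$ (hence $\sum g_j \le h(1 - 2/(d_*(p)-1))+2$) and at least $h/(2 d_*(p)-2)-1$ entries equal to $+1$ (hence the matching lower bound), so $|\sum g_j| \le h(1-1/(d_*(p)-1))+2 \le h(1-1/(2d_*(p)))+2$ in the case where no $u_j$ vanishes. Since $h<p$ forces at most one $u_j$ to equal $0$, removing it splits $g$ into at most two sub-sequences to which the same analysis applies independently, adding at most $2$ to the constant and yielding the claimed bound $h(1-1/(2d_*(p)))+4$. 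The main obstacle I expect is the alternating-run bound: without the trick of dilating the arithmetic progression $y, y+2, \dots$ by $2^{-1}$, one can control only $+1$-runs of $g$, which yields just a one-sided upper bound on the character sum, not on its absolute value.
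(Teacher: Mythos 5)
Your proof is correct and is essentially the paper's argument in a different notation: your bound on maximal $+1$-runs of $g_j=u_ju_{j+1}$ is the paper's first splitting of $[h]$ into monochromatic intervals $R_j,N_j$ of length $\le d_*(p)$, and your bound on maximal $-1$-runs via dilating the step-two subprogression by $2^{-1}$ is exactly how the paper justifies $|I_j|\le 2d_*(p)$ for the alternating stretches in its second splitting. The two-sided counting and the $+4$ from the at most one vanishing term likewise match, so nothing further is needed.
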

\begin{proof} 
Without loss of generality we can  assume that $s=a=0$. 
    First of all, let us obtain the upper bound for the sum $ \sum_{x\in [h]} \chi(x(x+1))$. 
    We have 
    \begin{equation}\label{f:1st_splitting}
        [h] = \left(\bigsqcup_{j=1}^{n_R} R_j\right) \bigsqcup \left(\bigsqcup_{j=1}^{n_N} N_j \right) \,,
    \end{equation}
    where $R_j \subseteq \mathcal{R}$ and $N_j \subseteq \mathcal{N}$ are intervals.
    Clearly, $|n_R-n_N|\le 1$ and 
\[
    \sum_{x\in [h]} \chi(x(x+1)) = h - 2(n_R+n_N) + 1 \,.
\]
    We have $|R_j|, |N_j| \le d_* (p)$ and thus from \eqref{f:1st_splitting}, we obtain $n_R+n_N \ge h/d_*(p)$ as required. As for the lower bound, we need a  splitting other than \eqref{f:1st_splitting}.  
    Write 
 \begin{equation}\label{f:2nd_splitting}
        [h] = \left(\bigsqcup_{j=1}^{n'_R} R_j\right) \bigsqcup \left(\bigsqcup_{j=1}^{n'_N} N_j \right) \bigsqcup \left(\bigsqcup_{j=1}^{s} I_j \right) \,,
    \end{equation}
    where  $R_j \subseteq \mathcal{R}$ and $N_j \subseteq \mathcal{N}$ are intervals of length at least two, and intervals $I_j$ of quadratic residues/non--residues form the complement of $R_j,N_j$ in $[h]$.
    Again $|n'_R+n'_N - s|\le 1$ and   now we have in view of $|R_j|, |N_j|\ge 2$ that 
\[
    \sum_{x\in [h]} \chi(x(x+1)) = \sum_{j=1}^{n'_R} |R_j| + \sum_{j=1}^{n'_N} |N_j| - \sum_{j=1}^s |I_j| - n'_R - n'_N - s
\]
\[
    =  -h + 2\sum_{j=1}^{n'_R} |R_j| + 2\sum_{j=1}^{n'_N} |N_j| - n'_R - n'_N - s 
    \ge -h + 3n'_R + 3n'_N - s \ge - h  +2s -3 \,.
\]
    It is easy to see that  $|I_j| \le 2d_* (p)$ (alternatively, we can use the quantity $2d(p)+1$ here) and thus 
\[
   2s+1\ge s + n'_R + n'_N \ge h/(2d_*(p)) \,.
\]
    Combining the last two inequalities, we obtain the required result.
$\hfill\Box$
\end{proof}

\begin{remark}
    Thanks 
    to the fact that $\xi \cdot \mathcal{R}$ is equal to either $\mathcal{R}$ or $\mathcal{N}$ for any nonzero $\xi$ 
    one can deal with the general set $\mathcal{Q}_{a,b}$. 
    The only difference is that then we need to consider arithmetic progressions with step $b-a$ in this case.
\end{remark}

Using Theorem \ref{t:distribution} we can break the square--root barrier for sufficiently large arithmetic progressions $[h]$.  

\begin{corollary}
    Let $h$ be a number satisfying condition \eqref{cond:h}.
    Then
\begin{equation}\label{f:R_h}
    |\mathcal{R}^{+}_{[h]}| \ll \frac{C_* \sqrt{p}\cdot \log^2 p  \cdot \log^2 \left(\frac{4h}{\min^{} \{h, 2\log p\}}\right) \cdot \d_p (h)}{h} \,.
\end{equation}
\label{c:R_h}
\end{corollary}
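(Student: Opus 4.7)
The plan is to reduce the bound on $|\mathcal{R}^{+}_{[h]}|$ directly to Theorem \ref{t:distribution} via the key inequality
\[
|\mathcal{R}^{+}_{[h]}| \le (h+1)\, S^{\mathcal{R}}(h+1).
\]
By \eqref{f:inclusion_As}, the set $T := \mathcal{R}^{+}_{[h]}$ consists of those $x \in \F_p$ with $\{x, x+1, \dots, x+h\} \subseteq \mathcal{R}$, so each element of $T$ begins an arithmetic progression of length $h+1$ inside $\mathcal{R}$. The containment $T + \{0, 1, \dots, h\} \subseteq \mathcal{R}$ is then immediate, but this sum is not direct, so $T$ itself does not qualify as a competitor for $S^{\mathcal{R}}(h+1)$.

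To remedy this, I would extract a well-spaced subset $S \subseteq T$ by a greedy procedure: enumerate $T$ as $t_1 < t_2 < \dots < t_m$ using representatives in $\{0, 1, \dots, p-1\}$, set $s_1 := t_1$, and iteratively take $s_{j+1}$ to be the smallest $t_i$ with $t_i \ge s_j + h + 1$. Each step discards at most $h$ further elements of $T$ (those lying in $[s_j + 1, s_j + h]$), so $|S| \ge |T|/(h+1)$. By construction, the translates $s + \{0, 1, \dots, h\}$ for $s \in S$ are pairwise disjoint in $\Z$, hence in $\F_p$ after possibly removing a single element of $S$ to eliminate wrap-around at $p$. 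Therefore $S \dotplus \{0, 1, \dots, h\} \subseteq \mathcal{R}$, which yields $|S| \le S^{\mathcal{R}}(h+1)$, and combining the two inequalities gives the displayed bound.

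It remains to substitute Theorem \ref{t:distribution} at parameter $h+1$: the denominator $(h+1)^2$ in \eqref{f:distribution}, multiplied by the factor $h+1$ from the reduction, produces the $h$ in the denominator of \eqref{f:R_h}, while $\log^2\!\left(\tfrac{4(h+1)}{\min\{h+1,\, 2\log p\}}\right)$ and $\d_p(h+1)$ agree with their counterparts at $h$ up to absolute constants absorbed in $\ll$. The only potential obstacle is essentially cosmetic, namely handling the $\F_p$ wrap-around in the greedy step together with the minor perturbation $h \mapsto h+1$, and neither affects the claimed asymptotic.
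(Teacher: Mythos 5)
Your proposal is correct and is essentially the paper's own argument: both reduce \eqref{f:R_h} to Theorem \ref{t:distribution} by extracting a subset $X\subseteq \mathcal{R}^{+}_{[h]}$ of size $\gg |\mathcal{R}^{+}_{[h]}|/h$ whose sum with the interval is direct, the paper via the Ruzsa covering argument (a maximal $X$ with $X+[h]$ direct satisfies $\mathcal{R}^{+}_{[h]}\subseteq X+[h]-[h]$) and you via an equivalent greedy one-dimensional packing. The bookkeeping with $h\mapsto h+1$ and the wrap-around is harmless, as you note.
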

\begin{proof}
    We use the Ruzsa covering argument.
    By \eqref{f:inclusion_As} we have $\mathcal{R}^{+}_{[h]}+[h] \subseteq \mathcal{R}$. 
    Take the maximal  set 
    $X \subseteq \mathcal{R}^{+}_{[h]}$
    such that  the sum $X +[h]$ is direct. Then it is easy to see that 
\[
    \mathcal{R}^{+}_{[h]}  \subseteq [h]-[h] + X 
\]
    and hence $|X| \ge |\mathcal{R}^{+}_{[h]}|/(2h)$. 
    Applying Theorem \ref{t:distribution}, we obtain 
\[
    h |\mathcal{R}^{+}_{[h]}| \ll h^2 |X| \ll C_* \sqrt{p}\cdot \log^2 p \cdot \log^2 \left(\frac{4h}{\min^{} \{h, 2\log p\}}\right) \cdot \d_p (h)
\]
    as required. 
$\hfill\Box$
\end{proof}

\begin{remark}
\label{r:optimality}
    Let us show that condition \eqref{cond:h} and bound \eqref{f:distribution} are close to optimal. 
    Indeed, suppose that condition \eqref{cond:h} can be weakened by about a factor of 
    four. 
    Namely, choosing  $h = \frac{1}{2} \log p - \log (C\log p)$ for sufficiently large $C$, we obtain from \eqref{f:R_s} (actually, we need  the asymptotic formula here) that $|\mathcal{R}^{+}_{[h]}|\gg \sqrt{p} \cdot \log p$ and this lower bound coincides with upper estimate \eqref{f:R_h}. 
\end{remark}

Now we consider a more general question (see, e.g., \cite{alsetri2022hilbert}) about maximal GAP in the set of quadratic residues. 

\begin{corollary}
    Let $G = \{ \a_1 x_1 + \dots + \a_r x_r + \beta ~:~ x_j \in [h] \}$ be a GAP such that $\{ \a_1 x_1 + \dots + \a_r x_r + \beta ~:~ |x_j| \le h^2 \}$ is a proper GAP. 
    Suppose that $G\subseteq \mathcal{R}$.
    Then  
\begin{equation}\label{f:Kerr1}
    |G| \ll p^{1/4} (2\log p)^{3r/2+1} \,.
\end{equation}
    In particular, for any $\eps \in (0,1/2)$ and any proper GAP $G$ such that $G\subseteq \mathcal{R}$ and 
    $$
    \mathrm{rk} (G) \ll \frac{\eps^3  \log p}{\log \log p}
    $$ 
    one has 
\begin{equation}\label{f:Kerr2}
    |G| \ll p^{1/4+\eps} \,.
\end{equation}
\label{c:Kerr}
\end{corollary}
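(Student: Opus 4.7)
We adapt the Burgess-style template from the proof of Theorem \ref{t:distribution} (the $d(p) \ll p^{1/4}\log p$ derivation) to generalized arithmetic progressions. Set $k := \lceil 2\log p\rceil$ and, for $m \in \{2, 2k\}$, introduce the sub-GAPs
\[
G^{1/m} := \{\alpha_1 y_1 + \dots + \alpha_r y_r : 0 \le y_j \le \lfloor h/m\rfloor\}, \qquad G_\beta^{1/2} := \beta + G^{1/2}.
\]
The properness of the thickened GAP ensures these sub-GAPs are proper, $|G^{1/m}| = (\lfloor h/m\rfloor+1)^r$, and that direct sums like $G^{1/2k} + G^{1/2}$ embed canonically in the thickened GAP.

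The main inclusion $G^{1/2k} + G_\beta^{1/2} \subseteq G \subseteq \mathcal{R}$ combined with $\mathcal{R}/\mathcal{R} = \mathcal{R}$ gives $\frac{G^{1/2k}}{G_\beta^{1/2}} \subseteq \mathcal{R} - 1$, and the dilation trick $j \cdot G^{1/2k} \subseteq G^{1/2}$ for $j \in [k]$ (as in the proof of Theorem \ref{t:distribution}) yields
\[
\frac{G^{1/2k}}{G_\beta^{1/2}} \subseteq \bigcap_{j=1}^k (j^{-1}\mathcal{R} - j^{-1}),
\]
so by \eqref{f:R_s} the ratio set has size at most $p/2^k + k\sqrt{p} \ll k\sqrt{p}$.

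The heart of the proof is the matching lower bound. Let $P$ be the set of primes in $[h/(4k), h/(2k)]$ and form the sub-GAP $Q := \{\alpha_1 q_1 + \dots + \alpha_r q_r : q_j \in P\} \subseteq G^{1/2k}$. By the properness of the thickened GAP, $|Q| = |P|^r \asymp ((h/k)/\log(h/k))^r$. An $r$-dimensional variant of Lemma \ref{l:E(Q,A)} (proven along the same lines as that lemma, but exploiting the coprimality of primes in each of the $r$ coordinates independently) bounds
\[
\E^{\times}(Q, G_\beta^{1/2}) \ll |Q|\,|G_\beta^{1/2}| \cdot \left(\frac{\log p}{\log(h/k)}\right)^r.
\]
Cauchy--Schwarz \eqref{f:energy_CS} then gives $|Q/G_\beta^{1/2}| \gg |Q|\,|G_\beta^{1/2}|(\log(h/k)/\log p)^r$. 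Since $Q \subseteq G^{1/2k}$, this is bounded above by $2k\sqrt{p}$; substituting $|Q| \asymp ((h/k)/\log(h/k))^r$ and $|G_\beta^{1/2}| \asymp (h/2)^r$ and rearranging yields
\[
h^{2r} \ll 2^{2r+2}(\log p)^{2r+1}\sqrt{p},
\]
so $|G| = h^r \ll p^{1/4}(2\log p)^{3r/2+1}$, proving \eqref{f:Kerr1}.

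For \eqref{f:Kerr2}, the hypothesis $\mathrm{rk}(G) \ll \varepsilon^3\log p/\log\log p$ forces $(3r/2 + 1)\log(2\log p) \ll \varepsilon\log p$, so $(2\log p)^{3r/2+1} \le p^{\varepsilon}$ and the conclusion follows from \eqref{f:Kerr1}. The main obstacle is the $r$-dimensional multiplicative-energy estimate on $\E^{\times}(Q, G_\beta^{1/2})$: Lemma \ref{l:E(Q,A)} as stated only handles a one-dimensional $Q$, so one must extend its proof to rank-$r$ ``prime GAPs'' $Q$; the thickened-GAP properness is what allows the congruential coprimality relations in $\F_p$ to be lifted to $\Z$ coordinate-by-coordinate, producing one factor of $\log p/\log(h/k)$ in the energy bound per coordinate.
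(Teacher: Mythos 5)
Your overall skeleton (the inclusion $\frac{G_0}{G'} \subseteq \bigcap_{j=1}^k (j^{-1}\cdot\mathcal{R} - j^{-1})$, the bound \eqref{f:R_s}, and Cauchy--Schwarz \eqref{f:energy_CS} against a multiplicative energy) matches the paper, but the step you yourself flag as ``the main obstacle'' is a genuine gap, and it would fail as described. The proof of Lemma \ref{l:E(Q,A)} hinges on the fact that $Q$ and $\ov{H}$ consist of \emph{small integers}: the congruence $q'x - qx' \equiv \la \pmod p$ is lifted to an equation in $\Z$ because $4h_*h < p$, and only then does divisibility by the primes $q,q'$ enter. For your rank-$r$ ``prime GAP'' $Q = \{\a_1 q_1 + \dots + \a_r q_r\}$ the generators $\a_j$ are arbitrary elements of $\F_p$, so the elements of $Q$ are not small integers and no such lift exists; moreover a product of two GAP elements produces cross terms $\a_i\a_j q_i a_j$ living in a rank-$r^2$ GAP with generators $\a_i\a_j$, and properness of the thickened $G$ says nothing about that object. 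There is no coordinate-by-coordinate coprimality argument to be had. This is exactly why the paper does not generalize Lemma \ref{l:E(Q,A)} at all but instead imports a nontrivial external input, \cite[Corollary 4]{Kerr_GAP_energy}, which gives $\E^\times(\t G) \ll |\t G|^2 L^{2r+1}$ for any GAP $\t G$ whose $h^2$-thickening is proper; combined with \eqref{f:energy_CS} and \eqref{tmp:10.09_1}-type bounds this yields \eqref{f:Kerr1} directly.

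There is a second, smaller gap in your deduction of \eqref{f:Kerr2}: you apply \eqref{f:Kerr1} to $G$ itself, but \eqref{f:Kerr1} assumes properness of the \emph{thickened} GAP $\{\sum \a_j x_j + \beta : |x_j|\le h^2\}$, whereas \eqref{f:Kerr2} only assumes $G$ is proper; the former is not implied by the latter. The paper bridges this by discarding coordinates with small side length and rewriting the remaining coordinates in base $[h^{\eps/16}]$, producing a sub-GAP $G_*\subseteq G$ with $|G_*|\ge |G|^{1-\eps/4}$, rank $O(r\eps^{-2})$, and side lengths small enough that properness of $G$ forces properness of the thickened $G_*$; only then is the \eqref{f:Kerr1}-argument applied, and the rank hypothesis $r \ll \eps^3\log p/\log\log p$ is calibrated against the inflated rank $O(r\eps^{-2})$, not against $r$. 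Your arithmetic ``$(2\log p)^{3r/2+1}\le p^\eps$'' is fine as far as it goes, but it is answering the wrong question.
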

\begin{proof}
    Let $L = \log p$.
    Also, we use the notation  $G=G(h)$, $G' = G(h/2)$ and let 
    $$G_0 = G(h/2k) - \beta = \{ \a_1 x_1 + \dots + \a_r x_r~:~ x_j \in [h/2k] \} \,,
    $$
    where $k=L$. 
    Then we have an analogue of inclusion \eqref{f:main_inclusion}, namely, 
\begin{equation*}
    \frac{G_0}{G'} \subseteq (\mathcal{R}-1) \cap (2^{-1}\cdot \mathcal{R}-2^{-1}) \cap \dots \cap (k^{-1} \cdot \mathcal{R}-k^{-1}) \,.
\end{equation*}
    It remains to apply \cite[Corollary 4]{Kerr_GAP_energy},  which says that $\E^\times (\tilde{G}) \ll |\tilde{G}|^2 L^{2r+1}$ for any GAP $\tilde{G}$ such that $\tilde{G}(h^2)$ is a proper GAP and use the Cauchy--Schwartz inequality \eqref{f:energy_CS}. 
    It gives us
\[
   |G|^2 L^{-(2r+1)}  (4L)^{-r} \ll |G_0| |G'| L^{-(2r+1)} \ll  L\sqrt{p}
\]
    as required. 

    To 
    get 
    \eqref{f:Kerr2}  we need to use some arguments of \cite{Kerr_GAP_energy} again. 
    Let $G = \{ \a_1 x_1 + \dots + \a_r x_r + \beta ~:~ x_j \in [h_j] \}$ and remove all $h_j$ such that $h_j \le |G|^{\eps/(16r)}$.
    We obtain a GAP $G'\subseteq G$, $|G'| \ge |G|^{1-\eps/16}$ with rank $r'=\mathrm{rk} (G') \le r$. Without loss of generality let $G' = \{ \a_1 x_1 + \dots + \a_{r'} x_{r'} + \beta ~:~ x_j \in [h_j] \}$ and let $h=\min_{j\in [r']} h_j \ge |G|^{\eps/(16r)}$. 
    Write any $x_j$, $j\in [r']$ in base $[h^{\eps/16}]$. 
    Then we obtain a GAP $G_* \subseteq G$, $|G_*| \ge |G|^{1-\eps/4}$ of rank 
\[
    \mathrm{rk} (G_*) \ll \sum_{j=1}^{r'} \frac{\log h_j}{\log (h^{\eps/16})}
    \ll
    r \eps^{-2} (\log |G|)^{-1} \sum_{j=1}^{r'} \log h_j
    \le 
    r \eps^{-2} 
\]
    and it is easy to see that we can apply the previous argument to $G_*$.
    It gives us 
\[
    |G|^{1-\eps/4} \le |G_*| \ll p^{1/4} L^{O(\eps^{-2} r^{})}
\]
    and hence choosing $r\ll \eps^3 L/\log L$, we obtain 
\[
    |G|\ll p^{1/4 + \eps/2 + \eps/2} = p^{1/4+\eps} 
\]
    as required. 
$\hfill\Box$
\end{proof}

\bp 

If one combine the energy bound of \cite{Kerr_GAP_energy} and the Burgess method (see, e.g., \cite{chang2008question}), then it is possible  to obtain a non--trivial upper bound for the sums of multiplicative characters over GAP $G$ of size $|G|\gg p^{1/4+\eps}$ (this improves 
\cite[Proposition 3.2]{alsetri2022hilbert}). We prefer to have a simple argument to get \eqref{f:Kerr2}.
Also, let us remark that our method  of 
obtaining 
upper bounds for $S(h)$ (see, e.g., Corollary \ref{c:Kerr}) is rather general, and if we have a non--trivial lower bound for $|S\dotplus \{ 0,1,\dots,h-1\}|$, see definition \eqref{def:S(h)}, then we obtain a refinement of Theorem \ref{t:distribution}. 
Also, it is easy to see that one can replace the segment $\{0,1,\dots,h-1\}$ in formula \eqref{def:S(h)} with  its arbitrary subset of density 
$1-O\left( \frac{1}{\log^{} h \cdot \log^{2} p} \right)$ 
and obtain some non--trivial results. 


\section{Additive--combinatorial considerations}
\label{sec:AC}

Recall that 
given a set $A \subset \F_p$ and a family of sets $\mathcal{F}$ we write
\[
    d_\mathcal{F} (p) = \max \{ |A| ~:~ A \subseteq \mathcal{R},\, A\in \mathcal{F} \} \,,
\]
    and 
\[
    S(A) = \max \{ |S| ~:~ S\subset \F_p,\, S\dotplus A \subseteq \mathcal{R} \} \,.
\]
Thus $S(h)=S(\{0,1,\dots,h-1\})$ and $d(p) =  d_{\mathcal{F}_{AP}} (p)$, where $\mathcal{F}_{AP}$ is the family of all arithmetic progressions in $\F_p$ with step one. 
In Corollary \ref{c:Kerr} we have estimated the quantity $d_{\mathcal{F}_{GAP}} (p)$ for the family of all GAP's in $\F_p$. 
Of course, the argument of the previous section works for sets of the form $A=XY+Z$, where $X,Y,Z\subset \F_p$, exactly as in the Burgess method, but we show that it is possible to estimate the quantities  $d_{\mathcal{F}_{SD}} (p)$, $S(A)$ for the family of sumsets of sets with small doubling.

\bp 

We need a lemma 
on the 
multiplicative structure of additively rich sets.
Of course, 
a similar result 
holds for rather general rings, but in this section we 
restrict 
ourselves to the case of $\F_p$.

\begin{lemma}
	Let $A\subseteq \F_p$ be a set such that  $|A+A| \le K|A|$.
	There exists an absolute constant $C >0$
	 such that for any positive integers  $d\ge 2$ and $l$ there is an element $a_*\in A$  and 
	a set $Z \subseteq A-a_*$ of size  $|Z| \ge   \exp \left(-C l^{3} d^2 (\log K)^2 \right) |A|$ with
\begin{equation}\label{f:mult_inclusion}	
[d^l] \cdot Z \subseteq 2A-2A \,.
\end{equation}
    Similarly,  there exists an element $a_*\in A$ and a set $W\subset A-a_*$ of size  $|W| \ge |A|/K^{3d-3}$ with
\begin{equation}\label{f:mult_inclusion_new}	
    [d] \cdot W \subseteq A-A \,.
\end{equation}
\label{l:mult_inclusion}
\end{lemma}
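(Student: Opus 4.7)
The plan is to use the Pl\"unnecke--Ruzsa inequality \eqref{f:Pl-R} together with pigeonhole and energy arguments; I would sketch the easier second claim first.

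For the inclusion $[d] \cdot W \subseteq A - A$: the condition $j(a-a_*) \in A - A$ rewrites as $ja + c = ja_* + b$ with $b,c \in A$. Since $j \cdot a = a + a + \cdots + a$ places the scalar dilation $j \cdot A$ inside the $j$-fold iterated sumset, Pl\"unnecke yields $|j \cdot A + A| \le K^{j+1}|A|$. Counting the tuples $(a, a_*, b_2, c_2, \ldots, b_d, c_d) \in A^{2d}$ that simultaneously satisfy $ja + c_j = ja_* + b_j$ for every $j \in \{2,\ldots,d\}$ via Cauchy--Schwarz against the image
\[
    S := \{(2a+c_2, 3a+c_3, \ldots, da+c_d) \,:\, a, c_2, \ldots, c_d \in A\},
\]
whose cardinality I would bound by $K^{3(d-1)}|A|^{d-1}$ by applying Pl\"unnecke in each coordinate (with the exponent $3$ coming from $|3A| \le K^3 |A|$) together with the diagonal structure of $S$, produces at least $|A|^2/K^{3(d-1)}$ good pairs $(a,a_*)$. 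Averaging over $a_* \in A$ then yields $W \subseteq A - a_*$ of cardinality at least $|A|/K^{3d-3}$ with $[d] \cdot W \subseteq A - A$.

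For the stronger inclusion $[d^l] \cdot Z \subseteq 2A - 2A$: the plan is to iteratively refine a chain $Z_0 \supseteq Z_1 \supseteq \cdots \supseteq Z_l$ of subsets of $A - a_*$ so that at stage $k$ one has $[d^k] \cdot Z_k \subseteq 2A - 2A$. The inductive step imposes a multiplicative closure $d \cdot Z_{k+1} \subseteq Z_k$ inside the refined set, which guarantees $[d^{k+1}] \cdot Z_{k+1} \subseteq [d^k] \cdot Z_k \subseteq 2A - 2A$. Enforcing this closure is the main technical step: a pigeonhole/energy argument analogous to part (ii), applied inside the already-refined set $Z_k$ and using the Freiman--Chang structural theorem on $A$ (giving rank $O(\log K)$ and size loss $\exp(O((\log K)^2))$) to control the interaction with scalar multiplication by $d$, produces $Z_{k+1}$ of the desired size. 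Accumulating these losses over $l$ iterations, with each stage costing a factor depending on $d$, $\log K$, and the level $k$, yields the claimed bound $\exp(-C l^3 d^2 (\log K)^2)|A|$.

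The main obstacle is enforcing the closure $d \cdot Z_{k+1} \subseteq Z_k$ at each stage without losing too much of $Z_k$. Naive iteration of the weaker inclusion $[d] \cdot Z \subseteq A - A$ would land $[d^l] \cdot Z$ inside $2^{l-1}A - 2^{l-1}A$, a set of size $\sim K^{2^{l}}|A|$ rather than the target $K^4 |A|$. The closure condition prevents this sumset blowup by folding every multiplication into a fixed shifted copy of $A$, but arranging it relies on the Freiman--Chang rank-$O(\log K)$ structure of $A$, and the resulting cost, compounded $l$ times across the $d$-fold family of constraints, is what produces the $l^3 d^2 (\log K)^2$ appearing in the exponent.
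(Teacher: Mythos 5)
Your treatment of the second inclusion \eqref{f:mult_inclusion_new} is essentially the paper's own proof. The paper considers the map $f(a_1,\dots,a_d)=(a_2-2a_1,\dots,a_d-da_1)$, bounds its image by $|A-2A|^{d-1}\le K^{3(d-1)}|A|^{d-1}$ using exactly the consecutive--difference (``diagonal'') observation you describe, and pigeonholes on a popular fibre; your Cauchy--Schwarz count of solutions to $ja+c_j=ja_*+b_j$ followed by averaging over $a_*$ is the same argument in energy form, and your image bound $K^{3(d-1)}|A|^{d-1}$ is correct.

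The first inclusion \eqref{f:mult_inclusion} is where the proposal breaks down (note that the paper does not reprove it either: it is quoted verbatim from \cite[Proposition 10]{Schoen_sh_Balog}). Your inductive step fails as stated: from $d\cdot Z_{k+1}\subseteq Z_k$ and $Z_{k+1}\subseteq Z_k$ you can only conclude $mz\in[d^k]\cdot Z_k$ for those $m\in[d^{k+1}]$ with $m\le d^k$ or $d\mid m$; an $m\in(d^k,d^{k+1}]$ not divisible by $d$ (for instance a prime in that range, which exists by Bertrand's postulate) is not of the form $dm'$ with $m'\le d^k$, and $[d^{k+1}]$ is not the product set $[d]\cdot[d^k]$ for the same reason. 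So the closure condition you propose to enforce would not yield $[d^{k+1}]\cdot Z_{k+1}\subseteq[d^k]\cdot Z_k$ even if it could be arranged. Moreover, no mechanism is actually given for arranging $d\cdot Z_{k+1}\subseteq Z_k$: the part-(ii) argument places dilates of a set inside the \emph{difference set} $A-A$, which is a far weaker conclusion than placing them inside a prescribed dense subset $Z_k$ of $A-a_*$, and the appeal to Freiman--Chang does not bridge this. Finally, the obvious fallback of running part (ii) with $d$ replaced by $d^l$ only gives a loss $K^{3(d^l-1)}$, exponential in $d^l$ rather than the claimed $\exp\left(-Cl^3d^2(\log K)^2\right)$; this quantitative gap is exactly why a genuinely different idea (that of \cite{Schoen_sh_Balog}) is required, and it is missing from the proposal.
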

\begin{proof}
    Inclusion \eqref{f:mult_inclusion} coincides with \cite[Proposition 10]{Schoen_sh_Balog} and the second formula \eqref{f:mult_inclusion_new} is
    actually 
    contained in this paper. 
    Indeed, consider the map $f:A^d \to \F^{d-1}_p$, 
\[
    f(\vec{a}) = f(a_1,a_2,\dots,a_d) = (a_2-2 a_1, \dots, a_d-d a_1) := (x_1,\dots,x_d)\,. 
\]
    It is easy to see that $x_1 \in A-2A$ and 
\[
    x_{j+1} - x_j = a_{j+1} - a_j - a_1 \in A-2A \,, \quad \quad j=1,\dots,d-1 \,.
\]
    Hence there is a vector $\vec{v}\in \F^{d-1}_p$ such that 
\[
    |\{ \vec{a} \in A^d ~:~ f(\vec{a}) = \vec{v} \}| \ge \frac{|A|^d}{|A-2A|^{d-1}} \ge \frac{|A|}{K^{3(d-1)}}
\]
    thanks to the Pl\"unnecke inequality \eqref{f:Pl-R}. 
    Thus for a fixed vector $(b_1,b_2,\dots,b_d)$ there are at least $\frac{|A|}{K^{3(d-1)}}$ solutions to the equation 
\begin{equation}\label{tmp:18.08_1}
    j (a_1-b_1) = a_j - b_j \in A-A \,, \quad \quad j=2,\dots,d \,.
\end{equation}   
    Let $W$ be the  set of elements $a_1$ satisfying \eqref{tmp:18.08_1} and put $a_*:=b_1$. 
    Then $[d] \cdot W \subseteq A-A$. 
This completes the proof.
$\hfill\Box$
\end{proof}

\bp 

We need \cite[Theorem 29]{collinear}. 

\begin{theorem}
    Let $A\subseteq \F_p$, $|A+A|\le K|A|$ and $|A| \le p^{13/23} K^{25/92}$. 
    Then 
\[
    \E^{\times} (A) \ll K^{51/26} |A|^{32/13} \log^{18/13} |A|
\]
\label{t:32/13_prod}
\end{theorem}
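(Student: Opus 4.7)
The plan is to reduce the multiplicative energy bound to a point--line incidence problem in $\F_p^2$ and apply a Stevens--de Zeeuw type estimate, combined with the Pl\"unnecke--Ruzsa inequality to extract the $K$--dependence. This is the collinear triples framework developed in \cite{collinear}.

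First I would expand $\E^{\times}(A) = \sum_{r \neq 0} r^2_{A/A}(r)$ and decompose dyadically over the multiplicative popularity level sets $R_\Delta = \{r \in \F^*_p : r_{A/A}(r) \asymp \Delta\}$. Up to a logarithmic loss this gives $\E^\times(A) \ll \log|A| \cdot \max_\Delta \Delta^2 |R_\Delta|$, so it suffices to control $\Delta^2|R_\Delta|$ for each dyadic $\Delta$. Geometrically, $\Delta|R_\Delta|$ is a lower bound for the number of incidences between the point set $P = A\times A$ and the line family $L_\Delta = \{y = rx : r \in R_\Delta\}$.

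Second, I would exploit the small doubling to amplify this incidence system. By Pl\"unnecke--Ruzsa \eqref{f:Pl-R}, $|kA - kA| \le K^{2k}|A|$ for any $k$, so one may replace the lines $y=rx$ by translates $y = rx + t$ with $t$ ranging over a suitable sumset, while keeping the image points inside a controlled set whose size grows only by a polynomial in $K$. On the inflated configuration I would invoke the Stevens--de Zeeuw incidence inequality,
\[
    I(P,L) \ll |P|^{11/15}|L|^{11/15} + |P| + |L| \,,
\]
and unwind the $K$ factors. The condition $|A| \le p^{13/23}K^{25/92}$ is calibrated precisely to ensure that, after the $K$--inflation of the point and line sets, one remains in the regime where the $|P|^{11/15}|L|^{11/15}$ term dominates the trivial contributions, so that the exponent $32/13$ actually becomes available. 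Summing over the $O(\log|A|)$ dyadic levels and resolving the implicit recursion produces the factor $\log^{18/13}|A|$.

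The main obstacle, and the substantive content of \cite[Theorem 29]{collinear}, is the precise $K$--bookkeeping through the repeated Pl\"unnecke--Ruzsa applications, the translation enlargement of the line family, and the optimization of the dyadic pigeonhole: it is this delicate balance that yields exactly the exponents $32/13$ and $51/26$ rather than cruder powers of $|A|$ and $K$. The argument genuinely needs the sharp Stevens--de Zeeuw bound in $\F_p^2$, and cannot be recovered from a Szemer\'edi--Trotter--type substitute.
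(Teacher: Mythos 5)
The first thing to say is that the paper contains no proof of Theorem \ref{t:32/13_prod} for you to match: the statement is imported verbatim as \cite[Theorem 29]{collinear}, and the text surrounding it is just that citation. So the only question is whether your sketch would actually establish the bound, and it would not. The opening reduction (dyadic pigeonholing to $\Delta^2|R_\Delta|$ and the observation that $\Delta|R_\Delta|$ is an incidence count for the lines $y=rx$) is fine, but everything that carries the content of the theorem --- the ``translation enlargement'', the $K$--bookkeeping, the verification that $|A|\le p^{13/23}K^{25/92}$ puts you in the right regime --- is asserted rather than performed, and you concede as much in your final paragraph.

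More seriously, the route you propose cannot produce the stated exponents even if completed honestly. Carrying out your amplification in the natural way --- points $P=A\times(A+T)$ with $T\subseteq A$, so $|P|\le K|A|^2$, and lines $\ell_{r,t}: y=rx+t$ with $(r,t)\in R_\Delta\times T$, so that $I(P,L)\ge \Delta|R_\Delta||T|$ --- the Stevens--de Zeeuw bound $I\ll|P|^{11/15}|L|^{11/15}$ gives $|R_\Delta|\ll K^{11/4}|A|^{9/2}\Delta^{-15/4}$, and optimizing $\Delta^2|R_\Delta|$ against the trivial bound $\Delta|A|^2$ yields $\E^{\times}(A)\ll K|A|^{32/11}\log|A|$; richer families of translates only improve this to roughly $|A|^{28/11}$. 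Both exponents sit \emph{above} the square--root barrier $|A|^{5/2}$, which one already gets for sets with small doubling by combining the inclusion $A\subseteq(A+A)-a$ with a single application of the point--plane theorem (Theorem \ref{t:Misha+}); the entire point of Theorem \ref{t:32/13_prod} is the gain $32/13=5/2-1/26$ \emph{below} that barrier. (You would also leave the admissible range $|L|<|P|^{8/7}$ of the Stevens--de Zeeuw theorem for small $\Delta$, where $|R_\Delta||T|$ can be as large as $|A|^3$.) This is not an accident of bookkeeping: the Stevens--de Zeeuw point--line bound is itself derived from Rudnev's point--plane theorem with losses, so routing an energy estimate through it is strictly weaker than applying the point--plane theorem directly. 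The actual proof of \cite[Theorem 29]{collinear} works with the point--plane theorem and higher multiplicative energies via a moment optimization, which is where the denominators $13$, $26$, $23$, $92$ come from; no choice of parameters in the scheme you describe will reproduce them.
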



Also, we need the Rudnev points/planes incidences result 
\cite{Rudnev_points/planes}. 

\begin{theorem}
	Let $p$ be an odd prime, $\mathcal{P} \subseteq \F_p^3$ be a set of points and $\Pi$ be a collection of planes in $\F_p^3$. 
	Suppose that $|\mathcal{P}| \le |\Pi|$, $|\mathcal{P}| \le  p^2$ and that $k$ is the maximum number of collinear points in $\mathcal{P}$. 
	Then 
	 \begin{equation}\label{f:Misha+_a}
	 |\{ (q,\pi) \in \mathcal{P} \times \Pi ~:~ q\in \pi \}|- \frac{|\mathcal{P}| |\Pi|}{p} \ll |\mathcal{P}|^{1/2} |\Pi| + k |\mathcal{P}| \,.	
	 \end{equation}
	\label{t:Misha+}
\end{theorem}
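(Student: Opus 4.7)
The plan is to reduce the point-plane incidence count, via a Cauchy--Schwarz inequality, to a weighted sum over intersection lines of pairs of planes, and then control that sum with a polynomial-method line-incidence bound in $\F_p^3$. Writing $I := |\{(q,\pi)\in\mathcal{P}\times\Pi : q\in\pi\}|$ and $r(q) := |\{\pi\in\Pi : q\in\pi\}|$, the Cauchy--Schwarz inequality gives
\[
    I^2 \le |\mathcal{P}|\sum_{q\in\mathcal{P}}r(q)^2 = |\mathcal{P}|\cdot |\{(q,\pi_1,\pi_2) : q\in\pi_1\cap\pi_2\}|.
\]
Splitting off the diagonal $\pi_1=\pi_2$ (which contributes $|\mathcal{P}| I$) and grouping the remaining pairs by the line $\ell = \pi_1\cap\pi_2$ transforms this into
\[
    I^2 - |\mathcal{P}| I \ll |\mathcal{P}|\sum_\ell N(\ell)^2 m(\ell),
\]
where $N(\ell) := |\{\pi\in\Pi : \pi\supset\ell\}|$ and $m(\ell) := |\mathcal{P}\cap\ell|\le k$.

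Next I would separate the contribution of ``generic'' lines (where $N(\ell)$ is close to its average value $\approx |\Pi|/p^2$) from that of ``rich'' lines. The generic contribution produces the main term $|\mathcal{P}||\Pi|^2/p^2$, which on substituting back and taking square roots yields exactly the expected $|\mathcal{P}||\Pi|/p$. The rich-line contribution is bounded by dualising --- identifying each plane with a point in the projective dual space, so that lines with large $N(\ell)$ correspond to lines in the dual carrying many $\Pi$-points --- and invoking a line-incidence bound in $\F_p^3$ of Guth--Katz type (valid because $|\mathcal{P}|\le p^2$). Combined with the trivial bound $m(\ell)\le k$, this produces excess of order $|\mathcal{P}||\Pi|^2 + k|\mathcal{P}|^2|\Pi|$ in $\sum_\ell N(\ell)^2 m(\ell)$, and after dividing by $|\mathcal{P}|$ and taking square roots the bound $|\mathcal{P}|^{1/2}|\Pi| + k|\mathcal{P}|$ emerges.

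The main obstacle is the three-dimensional line-incidence estimate itself: the naive inequality $\sum_\ell N(\ell)^2\le|\Pi|^2$ (coming from the fact that any two distinct planes meet in a unique projective line) would yield only $I \ll |\mathcal{P}| + |\Pi|\sqrt{k|\mathcal{P}|}$, which is strictly weaker than the stated bound in the relevant regime. Improving the $\sqrt{k|\mathcal{P}|}$ factor to $|\mathcal{P}|^{1/2}$ requires a genuine algebraic-geometric input --- in Rudnev's original argument this is obtained via polynomial partitioning together with a classification of lines lying on a common low-degree surface, exactly the ingredient that also underlies the Guth--Katz theorem over the reals. A secondary difficulty is the correct subtraction of the main term $|\mathcal{P}||\Pi|/p$: this is typically handled by arranging the Cauchy--Schwarz symmetrically in both the point and plane variables so that the expected contribution is extracted cleanly rather than being absorbed into the error term.
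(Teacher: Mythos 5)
You should know first that the paper does not prove this statement at all: Theorem \ref{t:Misha+} is Rudnev's point--plane incidence theorem, imported verbatim from \cite{Rudnev_points/planes} as a black box, so there is no in-paper argument to compare yours against. Judged on its own terms, your sketch is not a proof, and the gap sits exactly where you flag ``the main obstacle''. The Cauchy--Schwarz step $I^2 \le |\mathcal{P}|\sum_q r(q)^2$ and the passage to $\sum_\ell N(\ell)^2 m(\ell)$ are fine, but everything after that is asserted rather than derived: the generic/rich dichotomy, the clean extraction of the main term, and above all the ``Guth--Katz type'' input in the dual are the entire content of the theorem, not reductions of it. Worse, the numerology does not close. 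From $I^2 \le |\mathcal{P}| I + |\mathcal{P}|\sum_\ell N(\ell)^2 m(\ell)$, an excess of order $|\mathcal{P}||\Pi|^2 + k|\mathcal{P}|^2|\Pi|$ in $\sum_\ell N(\ell)^2 m(\ell)$ yields only $I \ll |\mathcal{P}||\Pi| + k^{1/2}|\mathcal{P}|^{3/2}|\Pi|^{1/2}$, which is trivial; to recover $|\mathcal{P}|^{1/2}|\Pi| + k|\mathcal{P}|$ you would need the far stronger estimate $\sum_\ell N(\ell)^2 m(\ell) \ll |\mathcal{P}||\Pi|^2/p^2 + |\Pi|^2 + k^2|\mathcal{P}|$, and it is not clear that any line-incidence bound, dualised or not, delivers a saving of that shape: the trivial identity $\sum_\ell N(\ell)(N(\ell)-1)\le|\Pi|^2$ combined with $m(\ell)\le k$ gives $k|\Pi|^2$, i.e.\ exactly the loss you acknowledge, and beating it requires controlling the correlation between $m(\ell)$ and $N(\ell)$, which your decomposition does not address.

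For the record, Rudnev's actual argument is structurally different and contains no second-moment step over the points. Via Pl\"ucker coordinates and the Klein quadric, the points and the planes of $\mathbb{P}^3$ are turned into two families of lines (after a generic projection, two families of lines in a copy of $\mathbb{P}^3$) so that an incidence $q\in\pi$ corresponds exactly to an intersecting pair of lines, one from each family; the theorem then follows from the positive-characteristic form of the Guth--Katz theorem on intersecting pairs of lines. This is where the hypothesis $|\mathcal{P}|\le p^2$ and the collinearity parameter $k$ genuinely enter (collinear points become degenerate line configurations that must be excised before the ruled-surface argument applies), and where the main term $|\mathcal{P}||\Pi|/p$ is produced. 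If you want to use the theorem here, cite it as the paper does; if you want to prove it, the irreducible ingredient you must supply is that algebraic line-counting theorem, not a dyadic decomposition of rich lines.
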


Now we are ready 
to prove the main result of this section, which generalizes the obtained upper bound for $S(h)$ and improves some results of \cite{Schoen_sh_Balog} and \cite{volostnov2018double}  applicable to  larger sets of size $p^{1/3+\eps}> p^{13/40}$.  
Our estimates \eqref{f:D_R_1}, \eqref{f:D_R_2} below work for subexponentially small doubling constants. 

\begin{theorem}
    Let $A\subseteq \F_p$, $|A+A|\le K|A|$. 
    Suppose that $3A-2A \subseteq \mathcal{R}$.
    Then 
\begin{equation}\label{f:D_R_1}
    |A| \ll p^{13/40} \exp (O(  (\log K)^2 (\log \log p)^3))\,.
\end{equation} 
    Also, for any set $A$ one has 
\begin{equation}\label{f:D_R_2}
    S(4A-4A) \ll \frac{p}{|A|^3} \cdot \exp \left(O( l^{3} (\log K)^2) \right) \,.
\end{equation} 
\label{t:D_R}
\end{theorem}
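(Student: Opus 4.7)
The plan is to exploit Lemma \ref{l:mult_inclusion} in both parts, turning the additive hypotheses into many arithmetic progressions of length $d^l$ sitting inside $\mathcal{R}$, and then combining the results of Section \ref{sec:proof} (Corollary \ref{c:R_h}) with the multiplicative energy bound (Theorem \ref{t:32/13_prod}).

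For Part 1, apply Lemma \ref{l:mult_inclusion} with $d=2$ and a parameter $l$ to be optimized: this produces $a_*\in A$ and $Z\subseteq A-a_*$ of size $|Z|\ge \exp(-Cl^3(\log K)^2)|A|$ with $[2^l]\cdot Z\subseteq 2A-2A$. Since $3A-2A=A+(2A-2A)\subseteq \mathcal{R}$, we have $a+[2^l]\cdot Z\subseteq \mathcal{R}$ for every $a\in A$. For each nonzero $z\in Z$, multiplying by $z^{-1}$ yields $az^{-1}+[2^l]\subseteq \mathcal{R}$ or $\mathcal{N}$ depending on the quadratic character of $z$. Writing $Z\setminus\{0\}=Z_R\sqcup Z_N$ and assuming $|Z_R|\ge |Z|/4$, the map $(a,z)\in A\times Z_R\mapsto az^{-1}$ sends $A\times Z_R$ into a translate of $\mathcal{R}^+_{[2^l-1]}$. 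Standard Cauchy--Schwarz gives
\[
(|A||Z_R|)^2\le |\mathcal{R}^+_{[2^l-1]}|\cdot\E^\times(A,Z_R)\le |\mathcal{R}^+_{[2^l-1]}|\cdot\sqrt{\E^\times(A)\,\E^\times(Z_R)}.
\]
Since $A-a_*$ has additive doubling $K$ and $Z_R\subseteq A-a_*$, monotonicity of multiplicative energy and Theorem \ref{t:32/13_prod} applied to both $A$ and $A-a_*$ yield $\E^\times(A),\E^\times(Z_R)\ll K^{51/26}|A|^{32/13}\log^{18/13}|A|$ (subject to $|A|\le p^{13/23}K^{25/92}$, which is consistent with the target). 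Combined with the bound $|\mathcal{R}^+_{[2^l-1]}|\ll\sqrt{p}\,L^{O(1)}/2^l$ from Corollary \ref{c:R_h} and the lower bound on $|Z_R|$, one obtains
\[
|A|^{20/13}\ll \sqrt{p}\cdot 2^{-l}\cdot K^{O(1)}L^{O(1)}\exp(O(l^3(\log K)^2)).
\]
Choosing $l=\Theta(\log L)$ balances the polylog factors; raising to the $13/20$-th power yields precisely $|A|\ll p^{13/40}\exp(O((\log K)^2(\log L)^3))$, matching \eqref{f:D_R_1}.

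For Part 2, let $S$ attain $S(4A-4A)$, so $S\dotplus(4A-4A)\subseteq\mathcal{R}$. Applying Lemma \ref{l:mult_inclusion} with parameters $d,l$ furnishes $Z\subseteq A-a_*$ with $|Z|\ge \exp(-Cl^3 d^2(\log K)^2)|A|$ and $[d^l]\cdot Z\subseteq 2A-2A\subseteq 4A-4A$, so that $s+[d^l]\cdot Z\subseteq\mathcal{R}$ for every $s\in S$. Fixing any nonzero $z_0\in Z$, the injective map $s\mapsto sz_0^{-1}$ sends $S$ into a translate of $\mathcal{R}^+_{[d^l-1]}\cup\mathcal{N}^+_{[d^l-1]}$, whence $|S|\le |\mathcal{R}^+_{[d^l-1]}|+|\mathcal{N}^+_{[d^l-1]}|$. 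The Weil-type bound \eqref{f:R_s} then produces $|S|\ll p/2^{d^l}+d^l\sqrt{p}$. Taking $d=2$ and choosing $l$ so that $2^{2^l}\gtrsim|A|^3$ bounds the main term by $p/|A|^3$, while the secondary term $d^l\sqrt{p}$ is absorbed into the factor $\exp(O(l^3(\log K)^2))$, completing \eqref{f:D_R_2}.

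The delicate point is Part 1: the key gain comes from applying Theorem \ref{t:32/13_prod} to both $A$ and $A-a_*$ rather than using the trivial bound $\E^\times(Z_R)\le|Z_R|^3$; the latter would yield only $p^{13/33}$, whereas the former reaches $p^{13/40}$. The size hypothesis $|A|\le p^{13/23}K^{25/92}$ of Theorem \ref{t:32/13_prod} is automatically consistent with the target conclusion, since any $|A|$ exceeding this threshold already greatly exceeds $p^{13/40}\exp(O((\log K)^2(\log\log p)^3))$ for reasonable $K$.
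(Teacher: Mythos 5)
Your treatment of \eqref{f:D_R_1} is essentially the paper's own argument: the paper works with the ratio set $\frac{Z}{A}\subseteq\tilde{\mathcal{R}}^{+}_{[2^l]^{-1}}$ and the raw Weil bound \eqref{f:R_s} where you work with $A/Z_R$ and Corollary \ref{c:R_h}, but the Cauchy--Schwarz step, the application of Theorem \ref{t:32/13_prod} to \emph{both} energies (your exponent accounting, $20/13$ versus $33/26$, is correct), and the choice $l\sim\log\log p$ are identical, and the difference is cosmetic. One presentational defect: your verification of the hypothesis $|A|\le p^{13/23}K^{25/92}$ is circular as written (you invoke the conclusion to justify a step of its proof); the honest route is the trivial double character sum bound, which forces $|A|\,|A-A|\le p$ and hence $|A|\le p^{1/2}$, but the fact itself is fine.

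Part 2, however, has a genuine gap. You fix a single $z_0\in Z$ and bound $|S|=|Sz_0^{-1}|$ by the Weil estimate for one intersection of shifts, obtaining $|S|\ll p\,2^{-d^l}+d^l\sqrt p$. The second term is at least $\sqrt p$ no matter how $d$ and $l$ are chosen, and it is \emph{not} absorbed into $\exp(O(l^3(\log K)^2))$: that factor multiplies $p/|A|^3$, and in the interesting range $|A|\gg p^{1/6}$ (say $|A|\sim p^{1/3}$ with $K=O(1)$, where \eqref{f:D_R_2} asserts $S(4A-4A)\ll\exp(O((\log K)^2 l^3))$, far below $\sqrt p$) your bound reduces to $|S|\ll\sqrt p\log p$, which is strictly weaker than the claim. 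No argument that uses only one dilate of $S$ together with \eqref{f:R_s} can break the square-root barrier, because the error term $k\sqrt p$ in \eqref{f:R_s} is unavoidable. The ingredient you are missing is a \emph{lower} bound on a ratio set that couples $|S|$ and $|A|$: the paper takes $S$ with $S\dotplus 2T\subseteq\mathcal{R}$, $T=2A-2A$, $D=A-A$, notes $\frac{Z}{(S\dotplus D)+D}=\frac{Z}{S\dotplus T}\subseteq\tilde{\mathcal{R}}^{+}_{[2^l]^{-1}}$, and then applies Rudnev's point--plane incidence theorem (Theorem \ref{t:Misha+}, after checking $|S||Z||D|\le p^2$) to get $\left|\frac{Z}{(S\dotplus D)+D}\right|\gg\sqrt{|Z||D||S\dotplus D|}=|D|\sqrt{|Z||S|}$. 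Comparing this with the Weil upper bound $\ll\sqrt p\log p$ and using $|D|\ge|A|$ and $|Z|\ge\exp(-Cl^3(\log K)^2)|A|$ yields $|S|\ll p|A|^{-3}\exp(O(l^3(\log K)^2))$. Without this incidence-theoretic input your proof of \eqref{f:D_R_2} does not close.
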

\begin{proof}
    Let $L=\log p$, $D=A-A$ and $T=D-D$. 
    As in the proof of Theorem \ref{t:distribution} we have $\frac{T}{A} \subseteq \mathcal{R}-1$ (the set $A$ automatically belongs to $\mathcal{R}$).
    Applying the first part of Lemma \ref{l:mult_inclusion} with $d=2$ and $l\sim \log L$, we find a set $Z$, $|Z| \ge   \exp \left(-C l^{3} (\log K)^2 \right) |A|$ such that $\frac{Z}{A} \subseteq \tilde{\mathcal{R}}^{+}_{[2^l]^{-1}}$.
    Here and below $C>0$ is an absolute constant that  can change from line to line and we write $\tilde{\mathcal{R}}^{+}_{[2^l]^{-1}}$ for the intersection of these $2^l$ shifts of $\mathcal{R}$ or $\mathcal{N}$.
    Using the Cauchy--Schwarz inequality \eqref{f:energy_CS} and bound \eqref{f:R_s}, one gets 
\[
    \frac{|Z|^2 |A|^2}{\sqrt{\E^{\times} (A) \E^{\times} (Z)}} \le \left| \frac{Z}{A} \right| \ll L \sqrt{p} \,.
\]
    It remains to recall that $Z\subseteq A-a_*$ for a certain $a_*$ and  apply Theorem \ref{t:32/13_prod}. 
    We obtain 
\[
    |A|^4 \ll K^{51/26} |A|^{32/13} L^{31/13} \sqrt{p} \cdot \exp (C l^3 (\log K)^2) \ll |A|^{32/13} \sqrt{p} \cdot \exp (C l^3 (\log K)^2) 
\]
    and hence 
\[
    |A| \ll p^{13/40} \exp (C l^3 (\log K)^2) 
\]
    as required. It is easy to check that the condition  $|A| \le p^{13/23} K^{25/92}$ takes place, e.g., use previous known bounds \cite{Schoen_sh_Balog} or \cite{volostnov2018double} to estimate  size of $A$ or just apply the Gauss sums.

    Now let us obtain \eqref{f:D_R_2}.
    Let $S$ be the maximal set such that $S\dotplus 2T \subseteq \mathcal{R}$. 
    Since $D \subseteq T\subseteq 2T$, it follows that $\frac{T}{S\dotplus T} \subseteq \mathcal{R}-1$ and applying the first part of  Lemma \ref{l:mult_inclusion}  as above, we obtain 
$$
   \frac{Z}{(S \dotplus D) +D} = \frac{Z}{S\dotplus T} \subseteq  \tilde{\mathcal{R}}^{+}_{[2^l]^{-1}} \,.
$$
    Using 
    Theorem \ref{t:Misha+} 
    (or just see \cite[Lemma 22]{collinear}), we obtain  
\[
    |D| \sqrt{|A| |S|} \cdot \exp \left(-C l^{3} (\log K)^2 \right)  \ll \sqrt{|Z| |D| |S \dotplus D|} \ll \left| \frac{Z}{(S \dotplus D) +D} \right| \ll L \sqrt{p} \,.
\]
    It is easy to see that $|S||Z||D| \le (|S| |A|) |D| \le p|D| \le p^2$ and thus the Rudnev theorem can be applied. 
    Finally, we get 
\[
    |S| \ll \frac{p}{|A|^3} \cdot \exp \left(C l^{3} (\log K)^2 \right) \,.
\]
This completes the proof.
$\hfill\Box$
\end{proof}

\bp

Now we say a few words about general combinatorial properties of sets $A^{+}_{s_1,\dots,s_l}$ (and $A^{\times}_{s_1,\dots,s_l}$). The results below reflect the hypothesis that such sets must have a rich additive structure.  
For example, bound \eqref{f:As_cap_As*} 
says, in particular, that if size of $A^{+}_S$ is comparable with $|A|$, then the multiplicative energy of $A^{+}_S$ must be small. Also, it is known \cite{CS} that sets $A^{+}_S$ correlate with sets of almost periods of convolutions 
which
definitely additively rich. 
We 
enunciate 
the following 

\bp 

{\bf Question.} {\it Let $A \subset \F_p$ be a set, $|A|<\sqrt{p}$, say. Is it true that for any $S\subset \F_p$ one has 
$$|A^{+}_S \cdot A^{+}_S|\gg |A^{+}_S| \cdot |S|^c$$ for an absolute $c>0$?}

\bp

Let us formulate a positive result in the direction of the above hypothesis. 

\begin{theorem}
    Let $A\subseteq \F_p$ be a set and $S\subset \F^*_p$ be a set of shifts, $|S|\ge p^\d$, where $\d\in (0,1]$.
    Then there is $\eps(\d)>0$ such that for any $\la\neq 0$ the following holds
\begin{equation}\label{f:As_cap_As*}
    |A^{+}_S \cap \la (A^{+}_S)^{-1}| \ll |A| p^{-\eps} \,,
\end{equation}
    provided $|A|\le p^{1-\eps}$. 
    If $h$ be a positive integer, then for any arithmetic progressions $H_1$, $H_2$ of size $h$ one has 
\begin{equation}\label{f:Ah_cap_Ah*}
    |A^{+}_{H_1} \cap \la (A^{+}_{H_2})^{-1}| \ll |A| h^{-c} \,,
\end{equation}
    where $c>0$ is an absolute constant, and $|A| \le p h^{-c}$.\\ 
    Finally, for any $|A|\le p |S|^{-1/2}$, we have 
\begin{equation}\label{f:A_times}
    \E^{+} (A^\times_S) 
    \ll \frac{|A|^2 |A^\times_S|^2}{p} + |A^\times_S|^{3/2} |A|^{3/2} |S|^{-1/2} 
    \ll 
    |A^\times_S|^{3/2} |A|^{3/2} |S|^{-1/2} \,.
\end{equation}
\label{t:As_cap_As*}
\end{theorem}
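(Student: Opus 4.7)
The strategy for all three bounds is the same: express the left-hand side as an incidence count and invoke Theorem \ref{t:Misha+} (or, in one case, a Weil-type estimate of comparable strength).

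For \eqref{f:As_cap_As*}, set $Y = A^{+}_S \cap \la(A^{+}_S)^{-1}$ and, without loss of generality, assume $0 \notin Y$. For each $(y, s) \in Y \times S$ the pair $(a, a') := (y + s, \la y^{-1} + s)$ lies in $A \times A$ and satisfies the hyperbolic relation $(a - s)(a' - s) = \la$. Since $y$ is a root of the quadratic $y^2 - (a - a')y - \la = 0$ (obtained from $y - \la y^{-1} = a - a'$), the map $(y, s) \mapsto (a, a', s)$ is at most two-to-one, and hence $|Y||S| \le 2N$ where
\[
    N = |\{(a, a', s) \in A \times A \times S : (a - s)(a' - s) = \la\}| = \sum_{u \ne 0} |S \cap (A - u) \cap (A - \la u^{-1})|
\]
after the substitution $u = a - s$, $v = a' - s$. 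I would bound $N$ either by a Weil-type Kloosterman estimate on the inner sum (note that $u \mapsto \la u^{-1}$ is a non-degenerate rational function) or by casting the hyperbola $uv = \la$ into a point-plane incidence configuration via Theorem \ref{t:Misha+}. Both approaches separate the main term $|A|^2|S|/p$ from an error term, and combined with $|S| \ge p^{\delta}$ and $|A| \le p^{1 - \eps}$ they yield $|Y| \ll |A| p^{-\eps}$ for a suitable $\eps = \eps(\delta) > 0$.

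For \eqref{f:Ah_cap_Ah*}, the same reduction applies with $S$ replaced by arithmetic progressions $H_1, H_2$ of size $h$. The crucial advantage is that $|H_j \pm H_j| \le 2h$, so one can bypass Weil-type input entirely: a direct application of the Pl\"unnecke--Ruzsa inequality \eqref{f:Pl-R} to the shifted copies of $A$ along $H_j$ turns the resulting combinatorial sum into a bound with a polynomial $h^{-c}$ saving, under the hypothesis $|A| \le p h^{-c}$.

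For \eqref{f:A_times}, let $B = A^{\times}_S$, so that $bs \in A$ for every $(b, s) \in B \times S$. Starting from the defining equation $b_1 - b_2 = b_3 - b_4$ of the additive energy, multiply through by $s_1 s_2 \in S^2$ and substitute $a_1 = b_1 s_1$, $a_2 = b_2 s_1$, $a_3 = b_3 s_2$, $a_4 = b_4 s_2$; all $a_i$ lie in $A$ and the equation becomes $s_2(a_1 - a_2) = s_1(a_3 - a_4)$. Hence
\[
    |S|^2 \E^{+}(B) \le T := |\{(a_1, a_2, a_3, a_4, s_1, s_2) \in A^4 \times S^2 : s_2(a_1 - a_2) = s_1(a_3 - a_4)\}|.
\]
To bound $T$ I would apply Theorem \ref{t:Misha+} with points $(s_1 s_2 b_1, s_1 s_2 b_2, s_1) \in \F_p^3$ indexed by $(b_1, b_2, s_1, s_2) \in B^2 \times S^2$ (equivalent to $(s_2 a_1, s_2 a_2, s_1)$ with the constraints built in), and planes $\pi_d : X - Y = dZ$ indexed by $d \in A - A$ with weight $r_{A - A}(d)$; a point is incident to $\pi_d$ precisely when the above linear equation holds. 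A dyadic decomposition of the planes by the weight $r_{A-A}(d)$, followed by Rudnev's bound (with $|\mathcal{P}| = |B|^2|S|^2$ and weighted $|\Pi| = |A|^2$), then gives $T$ bounded by $|A|^2|B|^2|S|^2/p$ from the main term plus the error $|B|^{3/2}|A|^{3/2}|S|^{3/2}$ after balancing the dyadic levels. Dividing by $|S|^2$ produces \eqref{f:A_times}.

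The main obstacle is in \eqref{f:A_times}: the planes $\pi_d$ share the common line $\{X = Y,\ Z = 0\}$, so the collinearity parameter $k$ in Theorem \ref{t:Misha+} could in principle be catastrophic. This is resolved by noting that every point of our configuration has $Z = s_1 \in S \subseteq \F_p^*$ and so avoids the exceptional line, but one still has to check that no other line captures too many points -- which amounts to bounding $k$ in terms of the multiplicative/additive structure of $A$ and $S$ and is the most delicate step of the argument. A related but milder subtlety in \eqref{f:As_cap_As*} is that the underlying surface $(a - s)(a' - s) = \la$ is a quadric and only linearises after the $(u, v)$-substitution described above.
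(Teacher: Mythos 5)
Your reductions are fine as far as they go, but the engine you propose to drive them cannot produce the stated savings, and this is exactly where the paper has to invoke a serious external input. For \eqref{f:As_cap_As*} the paper argues pointwise: $A^{+}_S(x)\le |S|^{-1}r_{A+S}(x)$ by \eqref{f:inclusion_As}, hence $|A^{+}_S\cap\la(A^{+}_S)^{-1}|\le |S|^{-2}r_{(A+S)(A+S)}(\la)$, and then quotes the Bourgain--Gamburd machine from \cite{sh_BG}, which gives $r_{(A+S)(A+S)}(\la)\le |A|^2|S|^2/p+O(|A||S|^2p^{-\eps})$ --- note the full factor $|S|^2$ in the error term. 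Your three-variable count $N=|\{(a,a',s):(a-s)(a'-s)=\la\}|$ would need the analogous bound $N\ll |A|^2|S|/p+|A||S|p^{-\eps}$, and neither of your suggested tools delivers it. Completing the sum and applying Weil to the resulting Kloosterman sums produces an error of order $\sqrt{p}\,|A|\,|S|^{1/2}$, which exceeds the target $|A||S|p^{-\eps}$ by at least $p^{\eps}$ for every admissible $|S|\le p$; and a single application of Theorem \ref{t:Misha+} gives an error of shape $(|A||S|)^{3/2}$, which after dividing by $|S|$ is $\ll|A|p^{-\eps}$ only when $|A|\lesssim|S|$ --- nowhere near the full range $|A|\le p^{1-\eps}$, $|S|\ge p^{\d}$ with $\d$ arbitrarily small. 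The whole point of the $\eps(\d)$ in the statement is that the saving survives for tiny $\d$, and that requires the iterative sum--product growth argument of \cite{sh_BG}, for which your proposal has no substitute. The same criticism applies with more force to \eqref{f:Ah_cap_Ah*}: the Pl\"unnecke--Ruzsa inequality controls only cardinalities of sumsets and gives no handle on the multiplicative representation function $r_{(A+H_1)(A+H_2)}(\la)$; the paper again cites \cite{sh_BG}, where it is the arithmetic-progression structure of the shifts that upgrades the saving from $p^{-\eps}$ to $h^{-c}$.

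Your treatment of \eqref{f:A_times} is the closest to the paper's (which bounds $\E^{+}(A^\times_S)\le |S|^{-2}\sum_x r^2_{AS+A^\times_S}(x)$ and applies Theorem \ref{t:Misha+}), but as set up it loses the factors of $|A^\times_S|$ in the main term: your incidence configuration has $|A|^2|S|^2$ weighted points against $|A|^2$ weighted planes, so the main term is $|A|^4|S|^2/p$ rather than $|A|^2|A^\times_S|^2|S|^2/p$, and after dividing by $|S|^2$ the quantity $|A|^4/p$ is not dominated by $|A^\times_S|^{3/2}|A|^{3/2}|S|^{-1/2}$ under the hypothesis $|A|\le p|S|^{-1/2}$ when $A^\times_S$ is small. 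The fix is precisely the paper's pointwise inequality $A^\times_S(x)\le |S|^{-1}r_{AS}(x)$, which keeps two factors of $A^\times_S$ in the count before the incidence theorem is applied. A minor further remark: your map $(y,s)\mapsto(a,a',s)$ in the first part is injective, since $y=a-s$ is recovered directly, so the quadratic-equation argument is unnecessary.
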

\begin{proof}
    In view of inclusion \eqref{f:inclusion_As} one has 
\[
    A^{+}_S (x) \le |S|^{-1} r_{A+S} (x) \,,
\]
    and thus 
\[
    |A^{+}_S \cap \la (A^{+}_S)^{-1}| \le |S|^{-2} r_{(A+S)(A+S)} (\la) \,.
\]
    It remains to use a consequence of the Bourgain--Gamburd machine see, e.g.,   \cite{sh_BG}. It follows that 
\begin{equation}\label{tmp:22.08_1}
    |A^{+}_S \cap \la (A^{+}_S)^{-1}| \le |S|^{-2} \left( \frac{|A|^2|S|^2}{p} + O(|A| |S|^2 p^{-\eps}) \right) = 
    \frac{|A|^2}{p} + O(|A| p^{-\eps}) = O(|A| p^{-\eps}) 
\end{equation}
    thanks to our condition $|A|\le p^{1-\eps}$.
    To get \eqref{f:Ah_cap_Ah*}, use the Bourgain--Gamburd machine again (see \cite[Theorem 2 or Theorem 3]{sh_BG}).
    Thanks to the specific form of our 
    shift sets 
    (they are arithmetic progressions) we have the saving $h^{-c}$ instead of $p^{-\eps}$ and therefore it is possible to obtain   \eqref{f:Ah_cap_Ah*} 
    exactly as  
    \eqref{tmp:22.08_1}. 
    It remains to derive \eqref{f:A_times}. 
    As above, we have 
\[
    A^{\times}_S (x) \le |S|^{-1} r_{AS} (x) 
\]
    thanks to inclusion \eqref{f:inclusion_As} and hence by Theorem \ref{t:Misha+}, we get
\[
    \E^{+} (A^\times_S) \le |S|^{-2} \sum_x r^2_{AS+A^\times_S} (x) 
    \ll 
    \frac{|A|^2 |A^\times_S|^2}{p} + |A^\times_S|^{3/2} |A|^{3/2} |S|^{-1/2}
\]
    as required. 
$\hfill\Box$
\end{proof}

\bp 

Let us make a general remark. 
Since 
\[
    A^{+}_{X+Y} = (A^{+}_{X})_Y = (A^{+}_{Y})_X \,,
\]
it follows that by Theorem \ref{t:As_cap_As*} 
\begin{equation}\label{f:A_X+Y}
    |A^{+}_{X+Y} \cap ((A^{+}_X)_Y)^{-1}| \ll |A^{+}_X| |Y|^{-\eps(\d)} 
\end{equation}
    for any $|Y|\ge p^\d$, $|A_X| \le p^{1-\eps(\d)}$ and similarly in the case of arithmetic progressions $X$ or $Y$.
    Using estimate \eqref{f:A_X+Y} we break the square--root barrier for quadratic residues one more time.

\begin{corollary}
    Let $X\subseteq \F_p$ be a set and $H \subseteq \F_p$ be an arithmetic progression. 
    Suppose that $|X| \ge \log p$ and $H+X \subseteq \mathcal{R}$. 
    Then 
\begin{equation}\label{f:R_R*}
    |\mathcal{R}^{+}_{(H+X) \cup (H+X)^{-1}}| \ll \frac{\sqrt{p} \log p }{|H|^c} \,,
\end{equation}
    where $c>0$ is an absolute constant. 
\label{c:R_R*}
\end{corollary}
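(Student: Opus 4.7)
The plan is to bound $B := \mathcal{R}^{+}_{(H+X) \cup (H+X)^{-1}}$ by reducing it to an expression estimable by the arithmetic progression bound \eqref{f:Ah_cap_Ah*} applied with ambient set $\mathcal{R}^{+}_X$ and shift AP $H$, and then to control $|\mathcal{R}^{+}_X|$ by the Weil estimate \eqref{f:R_s}.

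First I would verify the inclusion $B \cup B^{-1} \subseteq \mathcal{R}^{+}_{H+X}$. The containment $B \subseteq \mathcal{R}^{+}_{H+X}$ is immediate from the definition. For $B^{-1} \subseteq \mathcal{R}^{+}_{H+X}$, pick $y \in B$ and $s \in H+X$; since $s \in H+X \subseteq \mathcal{R}$ and $y \in \mathcal{R}$, multiplying the relation $y+s^{-1} \in \mathcal{R}$ by the quadratic residue $s$ gives $ys+1 \in \mathcal{R}$, and dividing by the quadratic residue $y$ gives $s + y^{-1} \in \mathcal{R}$. Letting $s$ range over $H+X$ yields $y^{-1} \in \mathcal{R}^{+}_{H+X}$, hence
\[
    |B| \le |\mathcal{R}^{+}_{H+X} \cap (\mathcal{R}^{+}_{H+X})^{-1}|.
\]

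Next I would apply \eqref{f:Ah_cap_Ah*} with $A := \mathcal{R}^{+}_X$, $H_1 = H_2 = H$, and $\lambda = 1$. Using the identification $\mathcal{R}^{+}_{H+X} = (\mathcal{R}^{+}_X)^{+}_H$ from the general remark preceding the corollary, this yields
\[
    |\mathcal{R}^{+}_{H+X} \cap (\mathcal{R}^{+}_{H+X})^{-1}| \ll |\mathcal{R}^{+}_X| \cdot |H|^{-c},
\]
provided the side condition $|\mathcal{R}^{+}_X| \le p|H|^{-c}$ holds (which is easily verified in the relevant regime, and outside it the claimed bound is either trivial or follows from a cruder argument). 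Finally, the hypothesis $|X| \ge \log p$ allows me to pick any subset $X' \subseteq X$ with $|X'| = \lceil \log p \rceil$; then $\mathcal{R}^{+}_X \subseteq \mathcal{R}^{+}_{X'}$ and \eqref{f:R_s} gives
\[
    |\mathcal{R}^{+}_X| \le \frac{p}{2^{|X'|+1}} + |X'|\sqrt{p} \ll \sqrt{p}\log p.
\]
Combining the two estimates produces $|B| \ll \sqrt{p}\log p / |H|^c$, as required.

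The main obstacle is purely bookkeeping: strictly only the inclusion $(\mathcal{R}^{+}_X)^{+}_H \subseteq \mathcal{R}^{+}_{H+X}$ holds in general, with equality precisely when $0 \in H \cap X$. Following the author's convention from the general remark preceding the corollary, this is handled by the harmless enlargement $X \leftarrow X \cup \{0\}$ together with a shift ensuring $0 \in H$ (changing $|H|$ by at most a bounded factor and hence absorbed into the exponent $c$), after which the identification used in the second step is literally valid and the argument goes through verbatim.
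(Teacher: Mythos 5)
Your argument is correct and follows essentially the same route as the paper's: you bound $\mathcal{R}^{+}_{(H+X)\cup(H+X)^{-1}}$ by $\mathcal{R}^{+}_{H+X}\cap(\mathcal{R}^{+}_{H+X})^{-1}$ (the paper cites \cite[Lemma 22]{sh_BG} for the inversion identity, which you instead verify directly), then apply the arithmetic--progression case \eqref{f:Ah_cap_Ah*} of Theorem \ref{t:As_cap_As*} to $A=\mathcal{R}^{+}_X$ via the identification $\mathcal{R}^{+}_{H+X}=(\mathcal{R}^{+}_X)^{+}_H$, and finish with \eqref{f:R_s} to get $|\mathcal{R}^{+}_X|\ll\sqrt{p}\log p$. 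Your closing normalization remark is appropriate (the paper silently asserts the identity $A^{+}_{X+Y}=(A^{+}_X)_Y$), though note that enlarging $X$ to $X\cup\{0\}$ technically presumes $H\subseteq\mathcal{R}$; the cleaner repair is to observe that every $y\in\mathcal{R}^{+}_{H+X}$ satisfies $r_{\tilde{A}-H}(y)\ge|H|$ with $\tilde{A}:=\bigcap_{x\in X}(\mathcal{R}-x)$, which is all the proof of \eqref{f:Ah_cap_Ah*} actually uses.
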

\begin{proof}
    Indeed, by estimate \eqref{f:A_X+Y} applied  to the arithmetic progression $H$, as well as formula \eqref{f:R_s}, we get 
\[
    |\mathcal{R}^{+}_{H+X} \cap ((\mathcal{R}^{+}_X)_H)^{-1}| 
    \ll 
    |\mathcal{R}^{+}_{X}| |H|^{-c}
    \ll 
    \sqrt{p} |H|^{-c} \log p \,.
\]
    It remains to notice that $(\mathcal{R}^{+}_{H+X})^{-1} = \mathcal{R}^{+}_{(H+X)^{-1}}$, thanks to our condition  $H+X \subseteq \mathcal{R}$, see \cite[Lemma 22]{sh_BG}. 
This completes the proof.
$\hfill\Box$
\end{proof}


\bp 

Finally, let us make a remark concerning the real case. 
Let $R \subset \mathbb{R}$ be a set with small multiplicative doubling in the sense that $|RR|\le K_* |R|$ and $|RR/R|\le K|R|$ for some parameters $K_*, K>1$. 
Take any vector $\vec{s} = (0,s_1,\dots,s_{2^l-1})$ and notice that inclusion \eqref{f:inclusion_As} implies 
\[
    \prod_{j=1}^{2^l} (R_{\vec{s}} + s_j) \subseteq R^{2^l} \,.
\]
Now applying \cite[Theorem 1.4]{Higher_convexity_E} and the Pl\"unnecke inequality \eqref{f:Pl-R+}, we obtain 
\[
    |R_{\vec{s}}|^{l+1-\a_l} K^{-2^{l+1}+2+2l-2\a_l} \ll K^{2^l}_* |R|
    \,,
\]
    where $\a_l:= \sum_{j=1}^l j2^{-j}$ and 
    therefore 
\[
    |R_{\vec{s}}| \ll |R|^{\frac{1}{l+1-\a_l}} (K^{2^l}_* K^{2^{l+1}-2-2l+2\a_l} )^{\frac{1}{l+1-\a_l}} \,.
\]
    It gives us a non--trivial analogue of formula \eqref{f:R_s}, as well as an analogue of the general result on the  intersections   of shifts of  multiplicative subgroups in $\F_p$, see  \cite{sh_vyugin_subgroups}.

\section{Concluding remarks}

Let $I \subseteq \F_p$ be an interval and consider the quantity 
\[
    S^{\mathcal{R} \cap I} (h) = \max \{ |S| ~:~ S\subset \F_p,\, S\dotplus \{0,1,\dots,h-1\} \subseteq \mathcal{R} \cap I \} \,.
\]
Thus we want to study gaps of $\mathcal{R}$ or $\mathcal{N}$ in a ``local'' sense. 
It is very interesting to say something non--trivial about the quantity $S^{\mathcal{R} \cap I} (h)$, depending on the size of 
$I$. Here we formulate a simple result in this direction, which improves Theorem \ref{t:distribution} roughly by a logarithm (if the interval $I$ is  small enough).


\begin{theorem}
    Let $I \subseteq \F_p$ be an interval, $h$ be a positive integer and $16 |I|h^2 < p \log^2 p$. 
    Then in the notation of Theorem \ref{t:distribution} one has 
\begin{equation}\label{f:S_I}
    S^{\mathcal{R} \cap I} (h) \ll 
    \frac{C_* \sqrt{p} \cdot \log^2 p}{h^2} \,,
\end{equation}
\label{t:S_I}
\end{theorem}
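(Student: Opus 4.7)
The plan is to mirror the proof of Theorem \ref{t:distribution}, modifying only the key multiplicative-energy estimate provided by Lemma \ref{l:E(Q,A)} so as to exploit the interval hypothesis $16|I|h^2 < p\log^2 p$. First I would take an extremal set $S$ with $|S|=S^{\mathcal{R}\cap I}(h)$, put $A := S\dotplus[h]\subseteq \mathcal{R}\cap I$, and choose $k = 2^{-1}\min\{h/2,\log p\}-1$ exactly as in the proof of Theorem \ref{t:distribution}. The main chain of inclusions \eqref{f:main_inclusion} combined with the Weil bound \eqref{f:R_s} yields $|P^{1/2k}_0/(P^{1/2}_0\dotplus S)|\ll C_* L\sqrt{p}$, unchanged from before.

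The new ingredient is the observation that $A$ and the auxiliary set $A' := P^{1/2}_0\dotplus S$ both lie in an interval of length at most $|I|+h\le 2|I|$, so that for $h_* := h/(2k)$ the hypothesis $16|I|h^2 < p\log^2 p$ forces $h_* |I| < p/8$. Letting $Q$ denote the set of primes in the upper half of $P^{1/2k}_0$, any identity $qa\equiv q'a'\pmod{p}$ with $q,q'\in Q$ and $a,a'\in A'$ therefore holds already in $\mathbb{Z}$. This permits me to bound $\E^\times(Q,A')$ by a direct combinatorial count: the diagonal $q=q'$ contributes $|Q||A'|$, while for distinct primes $q,q'\in Q$ coprimality forces $q'\mid a$, giving at most $O(|I|/h_*)$ choices for $a$ in the interval containing $A'$, with $a'=qa/q'$ then determined. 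Summing over $(q,q')\in Q^2$ with $q\neq q'$ yields
\[
\E^\times(Q,A') \ll |Q||A'| + \frac{|Q|^2|I|}{h_*}.
\]

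Under the hypothesis I would argue that the off-diagonal term $|Q|^2|I|/h_*$ is absorbed into $|Q||A'|$; when this is not immediate it can be verified by combining with the Fourier bound of Lemma \ref{l:E(Q,A)} or with the trivial estimate $|S|\le |I|/h$ in a short bootstrap. The net effect is to replace the factor $\min\{h_*/\log h_*, \log p/\log h_*\}$ appearing in \eqref{f:E(Q,A)} by an absolute constant, which in turn eliminates the factor $\log^2(4h/\min\{h,2\log p\})\cdot \delta_p(h)$ from the chain \eqref{f:bound_via_energy} and produces the advertised bound $S^{\mathcal{R}\cap I}(h)\ll C_*\sqrt{p}\log^2 p/h^2$. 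The hard part will be verifying that the off-diagonal contribution is indeed negligible across the full range of $h$ permitted by $16|I|h^2 < p\log^2 p$: one has to treat separately the regime $h\le 2\log p$ (where $\delta_p\equiv 1$ and Theorem \ref{t:distribution} already provides the desired estimate) and the regime of $h$ close to the Burgess threshold $p^{1/4}\log p$ (where the direct count must be reconciled with the Fourier bound of Lemma \ref{l:E(Q,A)}).
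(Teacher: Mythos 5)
Your skeleton (extremal $S$, the inclusion \eqref{f:main_inclusion}, the upper bound \eqref{tmp:10.09_2}) matches the paper, but the paper then takes a shorter route that you miss: it does not touch the primes $Q$ or the multiplicative energy at all. Since $P^{1/2}_0\dotplus S$ now lies in the interval $I$ and $16|I|h^2<p\log^2 p$, the paper invokes \cite[Lemma 26]{sh_BG} to lower--bound the ratio set directly, $\bigl|P^{1/2k}_0/(P^{1/2}_0\dotplus S)\bigr|\gg|\{(x,y):(x,y)=1\}|\gg|P^{1/2k}_0|\,|P^{1/2}_0|\,|S|\gg h^2|S|/L$, and compares this with \eqref{tmp:10.09_2}. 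Bypassing $Q$ is what removes \emph{both} parasitic factors: restricting to primes costs a density factor $\log h_*$ (and degenerates entirely when $h\le 2\log p$, where $h_*\approx 2$ and $Q$ is essentially a single element), so even in the best case your route retains a $\log\bigl(4h/\min\{h,2\log p\}\bigr)$ that the theorem claims to delete.

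Beyond that shortfall there are two genuine gaps. First, your reduction of $qa\equiv q'a'\pmod p$ to an equation in $\Z$ is unjustified: $A'$ sits in an interval of length about $|I|+h$ located \emph{anywhere} in $\F_p$, so the integer representatives of its elements can be of size comparable to $p$ and $qa$ can exceed $p$ many times over (also, the hypothesis gives only $h_*|I|\ll pL/h$, not $p/8$, and nothing useful at all when $h\le\log p$). The standard repair is the Dirichlet dilation trick of \cite[Lemma 26]{sh_BG}, but a dilation destroys the interval structure of $A'$ while a translation destroys its multiplicative energy, which is exactly why the paper works with the ratio set rather than with $\E^\times(Q,A')$. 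Second, the off--diagonal term $|Q|^2|I|/h_*$ cannot be absorbed into $|Q||A'|$: that would require $|I|\ll h|S|\log h_*$, a \emph{lower} bound on $|S|$, whereas you are proving an upper bound; the trivial estimate $|S|\le|I|/h$ points the wrong way and gives no bootstrap. If the off--diagonal term dominates, Cauchy--Schwarz returns only $|S|\ll p^{3/4}\log^2 p/h^{5/2}$, weaker than the target by a factor of order $p^{1/4}h^{-1/2}\ge p^{1/8}$ in the admissible range $h\ll p^{1/4}\log p$. (A sharper count --- restricting $a$ to the actual multiples of $q'$ inside $A'$, of which there are $O(|A'|/h_*)$, rather than to all multiples in $I$ --- would give $\E^\times(Q,A')\ll|Q||A'|$ and rescue this second point, but only after the reduction to $\Z$ is legitimately established, which is the real crux.)
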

\begin{proof}
    As in the proof of Theorem \ref{t:distribution}, we have formula 
    \eqref{tmp:10.09_2} and we need to estimate the left--hand side, that is 
    $\left|\frac{P^{1/2k}_0}{P^{1/2}_0 \dotplus S} \right|$.
    But now the sumset $P^{1/2}_0 \dotplus S$ belongs to $I$ and hence we can apply \cite[Lemma 26]{sh_BG} which says that the lower bound for $\left|\frac{P^{1/2k}_0}{P^{1/2}_0 \dotplus S} \right|$ 
     is 
$$
\Omega( |\{ (x,y)\in  P^{1/2k}_0 \times (P^{1/2}_0 \dotplus S)   ~:~ (x,y)=1 \}| )
\,.
$$
    It is easy to see (or just refer to the proof at the end of \cite[Lemma 26]{sh_BG}) that the last quantity is $\Omega(|P^{1/2k}_0| |P^{1/2}_0| |S|)$. 
This completes the proof.
$\hfill\Box$
\end{proof}

\bp

Using our combinatorial method, we can give 
rather simple 
proof of  \cite[Theorem 3]{Burgess_eps}.



\begin{theorem}
    Let $p$ be a sufficiently large prime, $\eps>0$ be an arbitrary number and $h$ be a positive integer such that $$ h \gg  p^{11/24} (\log p)^{4/3+\eta/6} (\log \log p)^{1/2} \,,$$
    where $\eta = 0.16656 < 1/6$ is a certain constant. 
    Then for any $s$ the segment $s+1,\dots, s+h$ contains a pair of consecutive quadratic residues and a  pair of consecutive quadratic non--residues. 
\label{t:Burgess_eps_0.25}
\end{theorem}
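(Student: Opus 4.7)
The plan is to argue by contradiction. Fix $s$ and suppose that the interval $I = \{s+1, \ldots, s+h\}$ contains no pair of consecutive quadratic residues; the case of no consecutive pair of non-residues is symmetric (replace $\chi$ by $-\chi$). The goal is to derive an upper bound on $h$ that contradicts the hypothesis.

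First, I expand the count of RR pairs in $I$ via the identity $\mathcal{R}(x) = (1+\chi(x))/2$ (with an $O(1)$ correction if $0 \in I$):
\[
    T^{++}_s := \sum_{x=s+1}^{s+h-1} \mathcal{R}(x)\mathcal{R}(x+1) = \frac{h-1}{4} + \frac{\Sigma_1 + \Sigma_2 + \Sigma_3}{4} + O(1),
\]
where $\Sigma_1 = \sum_{x=s+1}^{s+h-1}\chi(x)$, $\Sigma_2 = \sum_{x=s+1}^{s+h-1}\chi(x+1)$ and $\Sigma_3 = \sum_{x=s+1}^{s+h-1}\chi(x(x+1))$. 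The hypothesis $T^{++}_s = 0$ forces $\Sigma_1 + \Sigma_2 + \Sigma_3 = -(h-1) + O(1)$, so the three character sums must collectively carry essentially all of the main term $-(h-1)$.

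The central ingredient is Corollary \ref{c:quadratic_exp} (after the affine shift $x \mapsto x - s$, which does not disturb the symbol structure), which supplies the non-trivial lower bound
\[
    \Sigma_3 \ge -h + \frac{h}{2 d_*(p)} - 4 .
\]
Substituting this into the identity and rearranging gives $\Sigma_1 + \Sigma_2 \ge h/(2 d_*(p)) - O(1)$, i.e.\ the two linear character sums must together be unusually large. Inserting the Burgess bound $d_*(p) \ll p^{1/4}\log p$ (which the paper reproves via Theorem \ref{t:distribution}) together with a Burgess-type estimate $|\Sigma_i| \ll_r h^{1-1/r} p^{(r+1)/(4r^2)} (\log p)^{O(1/r)}$ with a parameter $r$ to be optimised, I obtain an upper bound on $h$ in terms of $p$, $r$, and the $\log$-factors coming from these two inputs.

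The main obstacle is the final optimisation: recovering the precise exponent $11/24 = 1/2 - \eta/4$ and the logarithmic factors $(\log p)^{4/3+\eta/6}(\log\log p)^{1/2}$ requires balancing the Burgess parameter $r$ (of order $\log\log p$) against the quality of the linear character sum bound, and the value $\eta = 0.16656$ corresponds to the sharpest available Burgess-type refinement over Pólya--Vinogradov. Everything preceding this bookkeeping step is a direct consequence of the combinatorial identity for $T^{++}_s$ together with Corollary \ref{c:quadratic_exp}; the remaining optimisation is tedious but mechanical and parallels the calculation in \cite{Burgess_eps}.
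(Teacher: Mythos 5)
Your reduction is clean as far as it goes: the identity $\sum_{x}\mathcal{R}(x)\mathcal{R}(x+1)=\frac{h-1}{4}+\frac{\Sigma_1+\Sigma_2+\Sigma_3}{4}+O(1)$ is correct, and combining the hypothesis $T^{++}_s=0$ with Corollary \ref{c:quadratic_exp} does force $|\Sigma_1+\Sigma_2|\ge h/(2d_*(p))-O(1)$. The genuine gap is in the step you dismiss as ``mechanical'': no choice of the Burgess parameter $r$ closes the argument at the claimed exponent. Your contradiction requires
\[
\frac{h}{2d_*(p)} \ll |\Sigma_1|+|\Sigma_2| \ll h^{1-1/r}p^{\frac{r+1}{4r^2}}(\log p)^{O(1)},
\]
and with $d_*(p)\ll p^{1/4}\log p$ this rearranges to $h\gg p^{\frac{r}{4}+\frac{r+1}{4r}}(\log p)^{O(r)}$. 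The exponent $\frac{r}{4}+\frac14+\frac{1}{4r}$ is minimised at $r=1$, where it equals $3/4$ (and $r=1$ is just P\'olya--Vinogradov). So this route proves the statement only for $h\gg p^{3/4+o(1)}$, far above $p^{11/24}$. The loss is structural: Corollary \ref{c:quadratic_exp} only tells you that $\Sigma_3$ misses $-h$ by $h/(2d_*(p))$, which costs a full factor of $d_*(p)\approx p^{1/4}$ before any character-sum estimate is even applied. (Also, $11/24$ is not $1/2-\eta/4$; in the paper $\eta$ enters only the logarithmic factor, and $11/24=\frac{2}{3}\cdot\frac{11}{16}$ comes from a different balance.)

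The paper's proof uses the no-consecutive-residues hypothesis much more efficiently. In the decomposition \eqref{f:2nd_splitting'} there are no residue blocks, so the interval splits into non-residue blocks $\Sigma_N$ and alternating blocks $I_j$; the linear sum $\sigma=\sum_{x\in s+[h]}\chi(x)$ forces $|\Sigma_N|\le 2\sigma+1$, so almost the whole interval consists of alternating blocks, each of which contains a step-two arithmetic progression of length $\gg|I_j|$ inside $\mathcal{R}$. Averaging produces $S(h_*)$ disjoint such progressions with $h_*\gg h/\sigma$ and $S(h_*)h_*\gg h$, and the local bound of Theorem \ref{t:S_I} then yields $h^2\ll L^2\sqrt{p}\,\sigma$. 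Here the Burgess bound is applied to make $\sigma$ \emph{small} on the right-hand side, rather than to contradict a largeness claim for $\Sigma_1+\Sigma_2$; this is what produces $h^{3/2}\ll p^{11/16}(\log p)^{O(1)}$ and hence the exponent $11/24$. To repair your argument you would need to replace the appeal to Corollary \ref{c:quadratic_exp} by this block decomposition and the local estimate of Theorem \ref{t:S_I}.
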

\begin{proof}
    We use the notation of the proofs of Theorem \ref{t:distribution} and Corollary \ref{c:quadratic_exp}. 
    Suppose that the segment $s+1,\dots, s+h$ does not contains a pair of consecutive quadratic residues (the argument for the non--residues is the same). Thus  the decomposition \eqref{f:2nd_splitting} has the form
\begin{equation}\label{f:2nd_splitting'}
        s+[h] = \left(\bigsqcup_{j=1}^{n'_N} N_j \right) \bigsqcup \left(\bigsqcup_{j=1}^{s} I_j \right) = \Sigma_N \bigsqcup \Sigma_I \,.
\end{equation}
    We have
\[
    \sigma :=\sum_{x\in s+[h]} \chi(x) = -|\Sigma_N| +s \,,
\]
    and therefore
\[
    2s-1\le 2n'_N \le |\Sigma_N| \le s+ \sigma \,.
\]
    In follows that $s\le \sigma +1$ and hence 
\[
    |\Sigma_N| \le 2\sigma +1 \,.
\]
    Below we definitely assume that $\sigma = o(h)$.
    Clearly, any 
    interval $I_j$ contains an arithmetic progression of length at least $|I_j|/2$ with step two belonging to $\mathcal{R}$. 
    Using the average argument and the obtained bound for $|\Sigma_N|$,  we find $S(h_*)$ disjoint arithmetic progression with step two belonging to $\mathcal{R}$, where $h_* \gg  h/\sigma$  and such that $S(h_*) h_* \gg h$. 
    Now applying  the modern form of Burgess bound \cite[Theorem 1.3]{BMT_Gal} with the parameter $r=2$, as well as Theorem \ref{t:S_I}, we get
\begin{equation}\label{tmp:h,h_*}
    h^2 \ll \sigma h_* h \ll L^2 \sqrt{p}  \sigma
    \ll L^2 \sqrt{p} \cdot h^{1/2} p^{3/16} L^{\eta/4} (\log L)^{3/4} 
\end{equation}
    and hence 
\[
    h \ll p^{11/24} L^{4/3+\eta/6} (\log L)^{1/2}
\]
as required.  It remains to check that $h^2_* h \ll p L^2$
 but it easily follows from \eqref{tmp:h,h_*}. 
This completes the proof.
$\hfill\Box$
\end{proof}



\bibliographystyle{abbrv}

\bibliography{bibliography}{}

\begin{thebibliography}{10}

\bibitem{alsetri2022hilbert}
A.~Alsetri and X.~Shao.
\newblock {On Hilbert cubes and primitive roots in finite fields}.
\newblock {\em Archiv der Mathematik}, 118:49--56, 2022.

\bibitem{BKS_alg}
J.~Bourgain, S.~Konyagin, and I.~Shparlinski.
\newblock {Character sums and deterministic polynomial root finding in finite
  fields}.
\newblock {\em Mathematics of Computation}, 84(296):2969--2977, 2015.

\bibitem{Burgess2}
D.~Burgess.
\newblock {A note on the distribution of residues and non-residues}.
\newblock {\em Journal of the London Mathematical Society}, 1(1):253--256,
  1963.

\bibitem{Burgess_eps}
D.~Burgess.
\newblock {On Dirichlet characters of polynomials}.
\newblock {\em Proceedings of the London Mathematical Society}, 3(1):537--548,
  1963.

\bibitem{Burgess3}
D.~Burgess.
\newblock {The character sum estimate with $r= 3$}.
\newblock {\em Journal of the London Mathematical Society}, 2(2):219--226,
  1986.

\bibitem{Burgess1}
D.~A. Burgess.
\newblock {On character sums and primitive roots}.
\newblock {\em Proceedings of the London Mathematical Society}, 3(1):179--192,
  1962.

\bibitem{chang2008question}
M.-C. Chang.
\newblock {On a question of Davenport and Lewis and new character sum bounds in
  finite fields}.
\newblock {\em Duke Math. J.}, 3(145):409--442, 2008.

\bibitem{CS}
E.~Croot and O.~Sisask.
\newblock {A probabilistic technique for finding almost-periods of
  convolutions}.
\newblock {\em Geometric and functional analysis}, 20:1367--1396, 2010.

\bibitem{BMT_Gal}
R.~de~la Bret{\`e}che, M.~Munsch, and G.~Tenenbaum.
\newblock {Small G{\'a}l sums and applications}.
\newblock {\em Journal of the London Mathematical Society}, 103(1):336--352,
  2021.

\bibitem{DES_digits}
R.~Dietmann, C.~Elsholtz, and I.~Shparlinski.
\newblock {Prescribing the binary digits of squarefree numbers and quadratic
  residues}.
\newblock {\em Transactions of the American Mathematical Society},
  369(12):8369--8388, 2017.

\bibitem{DES_Hamming}
R.~Dietmann, C.~Elsholtz, and I.~E. Shparlinski.
\newblock {On gaps between quadratic non-residues in the Euclidean and Hamming
  metrics}.
\newblock {\em Indagationes Mathematicae}, 24(4):930--938, 2013.

\bibitem{FI_char_sums}
J.~Friedlander and H.~Iwaniec.
\newblock {Estimates for character sums}.
\newblock {\em Proceedings of the American Mathematical Society},
  119(2):365--372, 1993.

\bibitem{Higher_convexity_E}
B.~Hanson, O.~Roche-Newton, and M.~Rudnev.
\newblock {Higher Convexity and Iterated Sum Sets}.
\newblock {\em Combinatorica}, 42(1):71--85, 2022.

\bibitem{H-B_Burgess_moments}
D.~Heath-Brown.
\newblock {Burgess's bounds for character sums}.
\newblock In {\em Number Theory and Related Fields: In Memory of Alf van der
  Poorten}, pages 199--213. Springer, 2013.

\bibitem{IK_book}
H.~Iwaniec and E.~Kowalski.
\newblock {\em {Analytic number theory}}, volume~53.
\newblock American Mathematical Soc., 2021.

\bibitem{Karatsuba_survey}
A.~A. Karatsuba.
\newblock {Arithmetic problems in the theory of Dirichlet characters}.
\newblock {\em Russian Mathematical Surveys}, 63(4):641, 2008.

\bibitem{Kerr_GAP_energy}
B.~Kerr.
\newblock {Some multiplicative equations in finite fields}.
\newblock {\em Finite Fields and Their Applications}, 75:101883, 2021.

\bibitem{collinear}
B.~Murphy, G.~Petridis, O.~Roche-Newton, M.~Rudnev, and I.~D. Shkredov.
\newblock {New results on sum-product type growth over fields}.
\newblock {\em Mathematika}, 65(3):588--642, 2019.

\bibitem{Polya}
G.~P{\'o}lya.
\newblock {{\"U}ber die Verteilung der quadratischen Reste und Nichtreste}.
\newblock {\em G{\"o}ttingen Nachrichten}, 1:21--29, 1918.

\bibitem{Rudnev_points/planes}
M.~Rudnev.
\newblock {On the number of incidences between points and planes in three
  dimensions}.
\newblock {\em Combinatorica}, 38:219--254, 2018.

\bibitem{Schoen_sh_Balog}
T.~Schoen and I.~D. Shkredov.
\newblock {Character sums estimates and an application to a problem of Balog}.
\newblock {\em Indiana Univ. Math. J.}, 3(71):953--964, 2022.

\bibitem{shao2015character}
X.~Shao.
\newblock {Character sums over unions of intervals}.
\newblock {\em Forum Mathematicum}, 27(5):3017--3026, 2015.

\bibitem{sh_BG}
I.~D. Shkredov.
\newblock {On a girth--free variant of the Bourgain--Gamburd machine}.
\newblock {\em Finite Fields and Their Applications}, 90:102225, 2023.

\bibitem{TV}
T.~Tao and V.~Vu.
\newblock {\em Additive combinatorics}, volume 105 of {\em Cambridge Studies in
  Advanced Mathematics}.
\newblock Cambridge University Press, Cambridge, 2006.

\bibitem{Vinogradov_1918}
I.~M. Vinogradov.
\newblock Sur la distribution des r{\'e}sidus et des non-r{\'e}sidus des
  puissances.
\newblock {\em J. Phys.-Math. Soc. Perm}, 1(1):94--98, 1918.

\bibitem{Vinogradov_selected}
I.~M. Vinogradov.
\newblock {\em {Selected work}}.
\newblock Berlin-New York: Springer-Verlag, 1985.

\bibitem{volostnov2018double}
A.~S. Volostnov.
\newblock {On Double Sums with Multiplicative Characters.}
\newblock {\em Mathematical Notes}, 104(3):197--203, 2018.

\bibitem{sh_vyugin_subgroups}
I.~V. Vyugin and I.~D. Shkredov.
\newblock {On additive shifts of multiplicative subgroups}.
\newblock {\em Matematicheskii Sbornik}, 203(6):81--100, 2012.

\end{thebibliography}

\end{document}